\def\z{{\bf z}}
\def\a{{\bf a}}
\def\divi{\hbox{\rm div\,}}
\newtheorem{theorem}{Theorem}[section]
\newtheorem{lemma}[theorem]{Lemma}
\newtheorem{definition}[theorem]{Definition}
\newtheorem{proposition}[theorem]{Proposition}
\newtheorem{remark}[theorem]{Remark}
\newtheorem{example}[theorem]{Example}
\newtheorem*{theorem*}{\it Theorem}
\def\divi{\hbox{\rm div\,}}
\def\R{\mathbb R}
\numberwithin{equation}{section}
\def\1{\raisebox{2pt}{\rm{$\chi$}}}
\def\XXint#1#2#3{{\setbox0=\hbox{$#1{#2#3}{\int}$}
		\vcenter{\hbox{$#2#3$}}\kern-.5\wd0}}
\newcommand{\labeltext}[2]{%
  \@bsphack
  \csname phantomsection\endcsname 
  \def\@currentlabel{#1}{\label{#2}}%
  \@esphack
}
\begin{document}
	
\title[Duality approach to gradient flows]{\bf A duality-based approach to gradient flows of linear growth functionals}
	
\author[W. G\'{o}rny and J. M. Maz\'on]{Wojciech G\'{o}rny and Jos\'e M. Maz\'on}
	
\address{ W. G\'{o}rny: Faculty of Mathematics, Universit\"at Wien, Oskar-Morgerstern-Platz 1, 1090 Vienna, Austria; Faculty of Mathematics, Informatics and Mechanics, University of Warsaw, Banacha 2, 02-097 Warsaw, Poland
\hfill\break\indent
{\tt  wojciech.gorny@univie.ac.at }
}
	
\address{J. M. Maz\'{o}n: Departamento de An\`{a}lisis Matem\`atico,
Universitat de Val\`encia, Dr. Moliner 50, 46100 Burjassot, Spain.
\hfill\break\indent
{\tt mazon@uv.es }}
	
%
%
	
\keywords{ Linear growth functionals, total variation flow, bounded variation functions, nonparametric area functional, duality methods\\
\indent 2020 {\it Mathematics Subject Classification:} 35K65, 35K67, 35K92, 49N15.}
	
\setcounter{tocdepth}{1}

\date{\today}
	
\begin{abstract}
We study gradient flows of general functionals with linear growth with very weak assumptions. Classical results concerning characterisation of solutions require differentiability of the Lagrangian, as for the time-dependent minimal surface equation, or a special form of the Lagrangian as in the total variation flow. We propose to study this problem using duality techniques, give a general definition of solutions and prove their existence and uniqueness. This approach also allows us to reduce the regularity and structure assumptions on the Lagrangian.
\end{abstract}
	
\maketitle
%
%

{\renewcommand\contentsname{Contents}
\setcounter{tocdepth}{3}
\tableofcontents}

\section{Introduction}\label{Introd}

In this paper, we are interested in the study of gradient flows of general functionals with linear growth. The setting is as follows: suppose that $\Omega \subset \mathbb{R}^N$ is an open bounded set with Lipschitz boundary and that $f \in C(\overline{\Omega} \times \mathbb{R}^N)$ is a convex function. Then, we study the gradient flow of the functional
\begin{equation}
F(u) = \int_\Omega f(x,Du)
\end{equation}
defined over $BV(\Omega) \cap L^2(\Omega)$, i.e. the evolution equation
\begin{equation}
\left\{ \begin{array}{lll} u_t (t,x) = {\rm div} (\partial_\xi f(x, Du(t,x)))   \quad &\hbox{in} \ \ (0, T) \times \Omega; \\[5pt] u(0,x) = u_0(x) \quad & \hbox{in} \ \  \Omega, \end{array} \right.
\end{equation}
where $u_0 \in L^2(\Omega)$, subject to Neumann or Dirichlet boundary conditions. Here, $\partial_\xi f$ denotes the subdifferential of $f$ in the second variable. A typical example of a Lagrangian $f(x, \xi)$ in this framework  is the nonparametric area integrand $f(x, \xi) = \sqrt{1 + \Vert \xi \Vert^2}$, whose associated  problem is the time-dependent minimal surface equation, which has been studied in \cite{DemengelTeman1} and \cite{LT}. The corresponding steady-state problem is the {\it nonparametric Plateau problem}
\begin{equation}\label{Plateau}
\left\{ \begin{array}{ll} - {\rm div} \left( \frac{\nabla u}{\sqrt{1 + \vert \nabla u \vert^2}} \right) = 0 \quad &\hbox{in} \quad \Omega; \\[10pt] u = \varphi \quad &\hbox{on} \quad \partial \Omega. \end{array}  \right.
\end{equation}
This problem was studied, using a combination of techniques from geometric measure theory and PDEs, by many authors including Federer, Fleming, De Giorgi, Bombieri, Giusti, Miranda, Nitsche and Serrin. A good reference are the monographs \cite{DHKW} and \cite{Giusti}. Another example is the evolution problem for plastic antiplanar shear studied in \cite{Zhou}, which corresponds to the plasticity functional $f$ given by
$$ f(\xi) = \left\{ \begin{array}{ll} \frac{1}{2} \Vert \xi \Vert^2 & \
\ \ {\rm
if} \ \ \ \Vert \xi \Vert \leq 1; \\
\\ \Vert \xi \Vert - \frac{1}{2}  & \ \ \ {\rm if} \ \ \ \Vert \xi
\Vert \geq 1.
\end{array}
\right.
$$
The evolution problem for Lagrangian $f$, which does not include the nonparametric area integrand, but includes the plasticity functional was studied in \cite{HardtZhou}.

 A different type of example is the total variation flow, see for instance \cite{ACMBook}, and its anisotropic variant \cite{Moll}. In the classical total variation flow, the Lagrangian is given by the formula $f(x,\xi) = \vert \xi \vert$, and in particular it is not differentiable at $0$. Also, the corresponding $1$-Laplacian operator
\begin{equation}
\Delta_1 u := - \mathrm{div} \bigg(\frac{Du}{|Du|} \bigg)
\end{equation}
is strongly degenerate. For this reason, the techniques used for the study of the total variation flow are entirely different from the regular case, for instance compare the definitions of solutions to the total variation flow and for gradient flows for differentiable functionals of linear growth given in \cite[Chapter 2]{ACMBook} and \cite[Chapter 6]{ACMBook}.

Our main goal is to apply a duality-based method to provide a unified framework for the study of the total variation flow and other linear growth functionals. In the case of a general linear growth functional, the classical results in \cite{ACMBook} (see also \cite{ACMIbero}, \cite{ACM4:01} and \cite{ACMJEE}) require that the Lagrangian $f$ is differentiable at zero in the second variable, which is clearly not satisfied for the total variation flow. We give a general definition of solutions, which agrees with the known notions of solutions for the classical problems, and prove existence and uniqueness of solutions. The method we propose also reduces the assumptions needed and gives a more streamlined and simplified proof.

\medskip
Throughout the whole paper, we assume that $\Omega$ is a bounded Lipschitz domain in $\mathbb{R}^N$ and $f \in C(\overline{\Omega} \times \mathbb{R}^N)$ is the Lagrangian which is convex in the second variable and linearly coercive. We work with only two assumptions regarding the Lagrangian:

{\flushleft (A1)\labeltext{(A1)}{A1}} There exists $M > 0$ such that
\begin{equation}\label{eq:LIN}
|f(x,\xi)| \leq M(1 + |\xi|) \qquad \mbox{for all } (x,\xi) \in \Omega \times \mathbb{R}^N.
\end{equation}

{\flushleft (A2)\labeltext{(A2)}{A2}} The following limit exists:
$$ f^0(x,\xi) = \lim_{t \rightarrow 0^+} t f(x,\xi / t)$$
and it defines a function which is symmetric in $\xi$ and jointly continuous in $(x,\xi)$.

These conditions are similar to the assumptions (Lin) and (Con) given in \cite{BS} regarding optimality conditions for solutions to minimisation problems involving linear growth functionals. They are, however, significantly weaker than the assumptions used typically in the study of gradient flows of functionals with linear growth, where one either assumes a special form of the functional (e.g. for the total variation flow or its anisotropic version) or $C^1$ regularity of $f$. Let us compare our assumptions to the assumptions (H1)-(H5) given in \cite[Chapter 6]{ACMBook}: we assume condition (H1) except for the differentiability of $f$; we assume condition (H2); and we do not assume conditions (H3)-(H5). We also only require the boundary of the domain to be Lipschitz. In particular, our method not only generalises known results to new situations, but also provides a uniform framework which covers both the case of the (anisotropic) total variation flow and the case of functionals of class $C^1$.

Under the assumptions \ref{A1}-\ref{A2}, we study the Neumann and Dirichlet problems for gradient flows of general functionals of linear growth (we also remark on the Cauchy problem on the whole space). The Dirichlet problem for the total variation flow was studied extensively in the literature, see for instance \cite{ABCM2} and \cite{ACMBook}, and its anisotropic case was considered in \cite{Moll} and \cite{CFM}. The Dirichlet problem for general linear growth functionals, under more restrictive assumptions which include differentiability of $f$, was also studied in \cite{ACMBook}. On the other hand, the Neumann problem was so far only studied in very specific cases, such as the isotropic total variation flow (see \cite{ACMBook,GM2021-3}), and are entirely new in the general setting. We give a definition of weak solutions in the general case, prove existence and a characterisation of weak solutions, and highlight how the general definition looks like when specified to the important special cases.

The structure of the paper is as follows. In Section \ref{Preli}, we recall the necessary definitions and results concerning BV functions, Anzellotti pairings, and functions of a measure. We also prove Proposition \ref{prop:resultfrombs}, which is a crucial estimate on the Anzellotti pairing in terms of the asymptotic function of $f$. Then, in Section \ref{Neumann} we study the Neumann problem; we characterise the subdifferential of the functional associated to the gradient flow and use it to obtain a characterisation of weak solutions. In Section \ref{sec:dirichlet}, we make an analogous reasoning for the Dirichlet problem. In both cases, we complement the reasoning with several examples, and we comment how to adapt our technique for the Cauchy problem on the whole space.

\section{Preliminaries}\label{Preli}

\subsection{BV functions and Anzellotti pairings}

Due to the linear growth condition on the Lagrangian, the natural energy
space to study the problem is the space of functions of bounded variation. Let us recall several facts concerning functions of bounded variation (for further information we refer to  \cite{AFP}, \cite{EG} or \cite{Ziemer}). Throughout the whole paper, we assume that $\Omega \subset \R^N$ is an open bounded set with Lipschitz boundary.

\begin{definition}{\rm
A function $u \in L^1(\Omega)$ whose partial derivatives in the sense
of distributions are measures with finite total variation in $\Omega$
is called a function of bounded variation.
The space of such functions will be denoted by $BV(\Omega)$. In other words,
$u \in BV(\Omega)$ if and only if there exist Radon measures
$\mu_{1},\ldots,\mu_{N}$ defined in $\Omega$ with finite total mass
in $\Omega$ and
\begin{equation}
\int_{\Omega} u \, D_{i} \varphi \, dx = -\int_{\Omega} \varphi \, d\mu_{i}
\end{equation}
for all $\varphi \in C^{\infty}_{0}(\Omega)$,
$i=1,\ldots,N$. Thus, the distributional gradient of $u$ (denoted $Du$) is a vector valued measure with finite total variation
\begin{equation}
\label{deftv}
\vert  Du \vert (\Omega) = \sup \bigg\{ \int_{\Omega} \, u \, \mathrm{div} \, \varphi \, dx: \,\, \varphi \in C^{\infty}_{0}(\Omega; \R^N), \,\, |\varphi(x)| \leq 1
\, \, \hbox{for $x \in \Omega$} \bigg\}.
\end{equation}
The space $BV(\Omega)$ is endowed with the norm
\begin{equation}
\| u \|_{BV(\Omega)} = \| u \|_{L^1(\Omega)} +
\vert  Du \vert (\Omega).
\end{equation}
}
\end{definition}

For $u \in BV(\Omega)$, the distributional gradient $Du$ is a Radon measure that decomposes
into its absolutely continuous and singular parts $Du = D^a u + D^s u$. Then $D^a u = \nabla u \ \mathcal{L}^N$, where $\nabla u$ is the Radon-Nikodym derivative of
the measure $Du$ with respect to the Lebesgue measure $\mathcal{L}^N$. There is also the polar decomposition $D^s u = \overrightarrow{D^s u} \vert D^s u \vert$, where $\vert D^s u \vert$ is the total variation measure of $D^s u$. We denote by $\nu_u:= \frac{d Du}{d\vert Du \vert}$ the Radon-Nikodym derivative of the measure $Du$ with respect to the measure $\vert Du \vert$.

We shall need several results from \cite{Anzellotti1} (see also \cite{KohnTeman}). Following \cite{Anzellotti1}, let
\begin{equation}
\label{xw}
X_p(\Omega) = \{ \z \in L^{\infty}(\Omega; \R^N):
\divi(\z)\in L^p(\Omega) \}.
\end{equation}

\begin{definition}{\rm
For $\z \in X_p(\Omega)$ and $u \in BV(\Omega) \cap L^{p'}(\Omega)$,
define the functional $(\z,Du): C^{\infty}_{0}(\Omega) \rightarrow \R$
by the formula
\begin{equation}\label{Anzel1}
\langle (\z,Du),\varphi \rangle = - \int_{\Omega} u \, \varphi \, \divi(\z) \, dx -
\int_{\Omega} u \, \z \cdot \nabla \varphi \, dx.
\end{equation}
}

\end{definition}

 The following result collects some of the most important properties of the pairing $(\z, Du)$, formally defined only as a distribution on $\Omega$.

\begin{proposition}
The distribution $(\z,Du)$ is a Radon measure in $\Omega$. Moreover,
\begin{equation}\label{eq:absolutecontinuity}
\bigg\vert \int_{B} (\z,Du) \bigg\vert \leq \int_{B} |(\z,Du)| \leq \| \z
\|_{\infty}
\int_{B} \vert Du \vert
\end{equation}
for any Borel set $B \subseteq \Omega$. In particular, $(\z,Du)$ is absolutely continuous with respect to $\vert Du \vert$. Furthermore,
\begin{equation}
\int_{\Omega} (\z,Dw) = \int_{\Omega} \z \cdot \nabla w \, dx \ \ \ \ \ \forall \ w
\in W^{1,1}(\Omega) \cap L^{\infty}(\Omega)
\end{equation}
so $(\z,Du)$ agrees on Sobolev functions with the dot product of $\z$ and $\nabla u$.
\end{proposition}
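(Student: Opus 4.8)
The plan is to deduce all three assertions from a single approximation argument, built on the elementary integration by parts already implicit in the definition \eqref{Anzel1}. First I would settle the last claim directly, since it needs no $BV$ theory. Fix $w \in W^{1,1}(\Omega) \cap L^\infty(\Omega)$ and $\varphi \in C_0^\infty(\Omega)$; then $w\varphi \in W^{1,1}(\Omega)$ has compact support in $\Omega$, so testing the relation $\divi \z \in L^p(\Omega)$ against $w\varphi$ gives $\int_\Omega \z \cdot \nabla(w\varphi)\, dx = - \int_\Omega w\varphi\, \divi(\z)\, dx$. Since $\nabla(w\varphi) = \varphi \nabla w + w \nabla \varphi$, rearranging and comparing with \eqref{Anzel1} yields $\langle (\z, Dw), \varphi \rangle = \int_\Omega \varphi\, \z \cdot \nabla w\, dx$, which is exactly the claimed identity; the minor bookkeeping needed to justify the integration by parts is handled by approximating $w$ in $W^{1,1}$ by smooth functions, for which it is just the Leibniz rule together with the Gauss--Green theorem.

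Next I would prove the key estimate. Let $(\rho_\varepsilon)_{\varepsilon>0}$ be a standard symmetric family of mollifiers, fix an open set $\Omega' \Subset \Omega$, and for $\varepsilon < \dist(\Omega', \partial\Omega)$ set $u_\varepsilon := u * \rho_\varepsilon$, which is smooth (hence locally bounded) on $\Omega'$, converges to $u$ in $L^1(\Omega')$ and in $L^{p'}(\Omega')$, and satisfies $|\nabla u_\varepsilon| \leq |Du| * \rho_\varepsilon$ pointwise on $\Omega'$. For $\varphi \in C_0^\infty(\Omega')$, the case already proved (applied on $\Omega'$, where $\z$ restricts to an element of $X_p(\Omega')$) gives $\langle (\z, Du_\varepsilon), \varphi \rangle = \int_{\Omega'} \varphi\, \z \cdot \nabla u_\varepsilon\, dx$, so that
\[ \big| \langle (\z, Du_\varepsilon), \varphi \rangle \big| \leq \| \z \|_\infty \int_{\Omega'} |\varphi|\, (|Du| * \rho_\varepsilon)\, dx = \| \z \|_\infty \int_{\Omega} (|\varphi| * \rho_\varepsilon)\, d|Du|, \]
the last equality being Fubini's theorem together with the symmetry of $\rho_\varepsilon$. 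On the other hand, reading off \eqref{Anzel1} and using $u_\varepsilon \to u$ in $L^{p'}(\Omega')$ tested against $\varphi\, \divi \z \in L^p(\Omega')$ and $u_\varepsilon \to u$ in $L^1(\Omega')$ tested against $\z \cdot \nabla\varphi \in L^\infty(\Omega')$, the left-hand side converges to $\langle (\z, Du), \varphi \rangle$; and since $|\varphi| * \rho_\varepsilon \to |\varphi|$ uniformly (with supports eventually inside $\Omega$), the right-hand side converges to $\| \z \|_\infty \int_\Omega |\varphi|\, d|Du|$. Letting $\Omega' \uparrow \Omega$ we obtain
\[ \big| \langle (\z, Du), \varphi \rangle \big| \leq \| \z \|_\infty \int_\Omega |\varphi|\, d|Du| \qquad \text{for every } \varphi \in C_0^\infty(\Omega). \]

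From this single inequality the rest is soft measure theory. Taking $|\varphi| \leq 1$ shows that $(\z, Du)$ extends to a bounded linear functional on $C_0^\infty(\Omega)$ for the sup-norm, hence on its closure $C_0(\Omega)$, so by the Riesz representation theorem it is given by integration against a signed Radon measure of finite total variation; thus $(\z, Du)$ is a Radon measure in $\Omega$. Applied with general $\varphi \in C_0(\Omega)$ (by density), the displayed inequality says that this measure is dominated by the positive measure $\| \z \|_\infty |Du|$: testing against $\varphi$ supported in an open set $A$ with $|\varphi| \leq 1$ and taking the supremum gives $|(\z, Du)|(A) \leq \| \z \|_\infty |Du|(A)$ for open $A$, hence for all Borel sets by outer regularity. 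In particular $(\z, Du) \ll |Du|$ and $\big| \int_B (\z, Du) \big| \leq \int_B |(\z, Du)| \leq \| \z \|_\infty \int_B |Du|$ for every Borel $B \subseteq \Omega$.

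I expect the only genuinely delicate point to be the approximation step, and specifically the fact that mollification is not globally available on $\Omega$: this is why one localizes to $\Omega' \Subset \Omega$ and then exhausts $\Omega$, and it is also where one must check that the $L^{p'}$-convergence needed to pass to the limit in \eqref{Anzel1} is at hand (it is, since $u \in L^{p'}(\Omega) \subset L^{p'}_{\mathrm{loc}}(\Omega)$). Everything else --- the Leibniz/Gauss--Green step, the Fubini interchange with the mollifier, and the passage from the test-function inequality to the Borel-set inequality --- is routine.
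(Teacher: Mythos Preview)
The paper does not supply a proof of this proposition: it is quoted from Anzellotti's original article \cite{Anzellotti1} as part of the preliminaries, so there is no in-paper argument to compare against. Your proof is correct and is essentially the classical one found there --- first settle the $W^{1,1}\cap L^\infty$ case by a Leibniz-plus-density integration by parts, then mollify on compactly contained subdomains to obtain the test-function bound $|\langle(\z,Du),\varphi\rangle|\le\|\z\|_\infty\int_\Omega|\varphi|\,d|Du|$, and finally read off the Radon-measure property and the Borel-set inequality via the Riesz representation theorem and outer regularity.

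One small caveat worth recording: when $p=1$ (so $p'=\infty$), the mollifications $u_\varepsilon$ need not converge to $u$ in $L^\infty(\Omega')$, so your sentence ``$u_\varepsilon\to u$ in $L^{p'}(\Omega')$ tested against $\varphi\,\divi\z\in L^p(\Omega')$'' is not literally correct in that endpoint case. The limit $\int_{\Omega'} u_\varepsilon\varphi\,\divi(\z)\,dx\to\int_{\Omega'} u\varphi\,\divi(\z)\,dx$ is still true, but one should justify it by dominated convergence (using $\|u_\varepsilon\|_\infty\le\|u\|_\infty$, $u_\varepsilon\to u$ a.e., and $\varphi\,\divi(\z)\in L^1$) rather than by $L^{p'}$-convergence. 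Everything else goes through as written.
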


By \eqref{eq:absolutecontinuity}, the measure $(\z,Du)$ has a Radon-Nikodym derivative with respect to $|Du|$
$$\theta (\z,Du, \cdot):= \frac{d[(\z, Du)]}{d|Du|}$$
which is a $\vert Du \vert$-measurable function from $\Omega$ to $\R$ such that
\begin{equation}
\int_{B} (\z,Du) = \int_{B} \theta (\z,Du,x) \vert Du \vert
\end{equation}
for any Borel set $B \subseteq \Omega$. We also have that
\begin{equation}
\| \theta (\z, Du, \cdot) \|_{L^{\infty}(\Omega,\vert Du \vert )} \leq
\| \z \|_{L^{\infty}(\Omega;\R^N)}.
\end{equation}
In \cite{Anzellotti1}, a weak trace on $\partial \Omega$ of the normal component
of \ $\z \in X_p(\Omega)$ is defined. Concretely, it is proved that there exists a linear operator \
$\gamma : X(\Omega) \rightarrow L^{\infty}(\partial \Omega)$ such that
$$\Vert \gamma(\z) \Vert_{\infty} \leq \Vert \z \Vert_{\infty}$$
$$\gamma(\z) (x) = \z(x) \cdot \nu_\Omega(x) \ \ \ \ {\rm for \ all} \ x \in
\partial
\Omega \ \ {\rm if} \
\ \z \in C^1(\overline{\Omega}, \R^N),$$
being $\nu_\Omega$ the unit outward normal on $\partial \Omega$. We shall denote \ $\gamma (\z)(x)$ by $[\z, \nu_\Omega](x)$. Moreover, the following {\it Green's formula}, relating the function $[\z, \nu_\Omega]$ and the measure $(\z, Dw)$, was proved in the same paper.

\begin{theorem}
For all $\z \in X_p(\Omega)$ and $u \in BV(\Omega) \cap L^{p'}(\Omega)$, we have
\begin{equation}\label{Green}
\int_{\Omega} u \ \divi (\z) \, dx + \int_{\Omega} (\z, Du) = \int_{\partial \Omega} u \, [\z, \nu_\Omega] \, d\mathcal{H}^{N-1}.
\end{equation}
\end{theorem}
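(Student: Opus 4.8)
The plan is to deduce \eqref{Green} from the classical divergence theorem by a two--step density argument: first verify the identity for smooth $u$, then pass to the limit along a strictly convergent sequence of smooth approximations. For $u\in W^{1,1}(\Omega)\cap L^{p'}(\Omega)$ --- in particular for $u\in C^{\infty}(\overline{\Omega})$ --- the Proposition above gives $(\z,Du)=\z\cdot\nabla u\,\mathcal L^N$, so \eqref{Green} reduces to $\int_{\Omega}u\,\divi(\z)\,dx+\int_{\Omega}\z\cdot\nabla u\,dx=\int_{\partial\Omega}u\,[\z,\nu_\Omega]\,d\mathcal H^{N-1}$. For $\z\in C^{1}(\overline{\Omega};\R^N)$ this is the usual divergence theorem, since then $[\z,\nu_\Omega]=\z\cdot\nu_\Omega$; for a general $\z\in X_p(\Omega)$ it is nothing but the defining property of the normal trace $[\z,\nu_\Omega]\in L^{\infty}(\partial\Omega)$, which in \cite{Anzellotti1} is obtained by approximating $\z$ in a suitable sense by $C^1$ vector fields $\z_k$ with $\divi(\z_k)\to\divi(\z)$ in $L^{p}(\Omega)$ and passing to the limit in the smooth Green formula. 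So this case requires nothing beyond having the trace operator at hand.

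Next I would take $u\in BV(\Omega)\cap L^{p'}(\Omega)$ and use that $\Omega$ is a bounded Lipschitz domain to pick $u_n\in C^{\infty}(\Omega)\cap W^{1,1}(\Omega)$ with $u_n\to u$ in $L^{1}(\Omega)$, $|Du_n|(\Omega)\to|Du|(\Omega)$ (strict convergence) and $u_n\to u$ in $L^{p'}(\Omega)$ --- for $p=1$ one truncates before mollifying so that $\|u_n\|_{\infty}\le\|u\|_{\infty}$ instead. Since the trace operator $BV(\Omega)\to L^{1}(\partial\Omega)$ is continuous with respect to strict convergence on a Lipschitz domain, also $u_n\to u$ in $L^{1}(\partial\Omega)$. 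Applying Step 1 to each $u_n$ and letting $n\to\infty$, the term $\int_{\Omega}u_n\,\divi(\z)\,dx$ converges to $\int_{\Omega}u\,\divi(\z)\,dx$ by duality between $L^{p'}$ and $L^{p}$ (or by dominated convergence when $p=1$, where $\divi(\z)\in L^1(\Omega)$ and $u_n$ is bounded), and $\int_{\partial\Omega}u_n\,[\z,\nu_\Omega]\,d\mathcal H^{N-1}$ converges to $\int_{\partial\Omega}u\,[\z,\nu_\Omega]\,d\mathcal H^{N-1}$ by duality between $L^1$ and $L^{\infty}$ on $\partial\Omega$. Hence $\int_{\Omega}(\z,Du_n)=\int_{\Omega}\z\cdot\nabla u_n\,dx$ converges, and everything comes down to identifying the limit as $\int_{\Omega}(\z,Du)$.

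This identification is the step I expect to be the main obstacle. Testing the distributional definition \eqref{Anzel1} against an arbitrary $\varphi\in C^{\infty}_{0}(\Omega)$ and using $u_n\to u$ in $L^{p'}(\Omega)$ and in $L^{1}(\Omega)$ shows $\langle(\z,Du_n),\varphi\rangle\to\langle(\z,Du),\varphi\rangle$, which together with the uniform bound $|(\z,Du_n)|(\Omega)\le\|\z\|_{\infty}\,|Du_n|(\Omega)$ coming from \eqref{eq:absolutecontinuity} gives $(\z,Du_n)\rightharpoonup(\z,Du)$ weakly-$*$ as Radon measures on $\Omega$. Weak-$*$ convergence by itself does not force the total masses to converge, so I need uniform tightness near $\partial\Omega$: here a standard compactness and lower--semicontinuity argument promotes strict convergence $u_n\to u$ to $|Du_n|\rightharpoonup|Du|$ weakly-$*$ in $\mathcal M(\overline{\Omega})$, so that no mass of $|Du_n|$ escapes to the boundary and $\sup_n|Du_n|(\{x\in\Omega:\dist(x,\partial\Omega)<\delta\})\to0$ as $\delta\to0^{+}$. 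Because $|(\z,Du_n)|\le\|\z\|_{\infty}\,|Du_n|$ by \eqref{eq:absolutecontinuity}, the measures $(\z,Du_n)$ are then uniformly tight in $\Omega$, and weak-$*$ convergence together with tightness gives $\int_{\Omega}(\z,Du_n)\to\int_{\Omega}(\z,Du)$; combined with the two limits above this yields \eqref{Green}. All the remaining pieces are soft functional analysis and approximation; the one delicate point is precisely this exclusion of a loss of mass of $(\z,Du_n)$ to $\partial\Omega$, which is where the strict --- rather than merely weak-$*$ --- convergence of the approximating sequence is essential.
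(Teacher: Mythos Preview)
The paper does not supply its own proof of this theorem: it is quoted from Anzellotti \cite{Anzellotti1}, where Green's formula is in fact the \emph{definition} of the normal trace. Concretely, one checks that the linear map
\[
u \longmapsto \int_{\Omega} u\,\divi(\z)\,dx + \int_{\Omega} (\z,Du)
\]
on $BV(\Omega)\cap L^{p'}(\Omega)$ depends only on the trace $u|_{\partial\Omega}$ and satisfies the bound $\|\z\|_\infty\,\|u|_{\partial\Omega}\|_{L^1(\partial\Omega)}$; hence it is represented by some $[\z,\nu_\Omega]\in L^\infty(\partial\Omega)$, and \eqref{Green} holds by construction for every $u\in BV(\Omega)\cap L^{p'}(\Omega)$. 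So there is nothing in the paper to compare your argument against.

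Your sketch is correct but takes a longer road than necessary, and one remark on it is slightly off. The normal trace in \cite{Anzellotti1} is \emph{not} obtained by approximating $\z$ with $C^1$ vector fields; it is defined via the functional representation above. Once that is granted, Green's formula already holds for general $u\in BV(\Omega)\cap L^{p'}(\Omega)$, so your second step --- strict approximation of $u$ and the tightness argument excluding loss of mass of $(\z,Du_n)$ at $\partial\Omega$ --- is correct but redundant. If you did want to proceed as you do, the tightness argument is sound: strict convergence gives $|Du_n|\rightharpoonup|Du|$ weakly-$*$ with $|Du_n|(\Omega)\to|Du|(\Omega)$, hence $\limsup_n |Du_n|(\Omega\setminus\Omega_\delta)\le|Du|(\Omega\setminus\Omega_\delta)\to 0$, and the bound $|(\z,Du_n)|\le\|\z\|_\infty|Du_n|$ transfers this to $(\z,Du_n)$.
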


Moreover, as a consequence of \cite[Proposition 2.2]{Anzellotti4} and \cite[Theorem 3.6]{Anzellotti4}  (see also \cite{Anzellotti3} and \cite{CCCM}), we have the following result.

\begin{theorem}
Suppose that $u \in BV(\Omega)$ and $\z \in X_1(\Omega)$. Then,
\begin{equation}\label{Anze1}
\frac{d[(\z, Du)]}{d|Du|}(x) = \lim_{\rho \to 0^+} \lim_{r \to 0^+} \frac{1}{2r \omega_{N-1} \rho^{N-1}} \int_{C_{r,\rho}(x, \nu_u(x))} \z(y) \cdot \nu_u(x) \, dy
\end{equation}
for $|Du|$-a.e. $x \in \Omega$. Here,
$$C_{r,\rho}(x, \alpha):= \{ \xi \in \R^N \ : \ \vert (\xi - x) \cdot \alpha \vert < r, \  \vert (\xi - x) - [(\xi - x)\cdot \alpha ] \alpha \vert < \rho \}, \quad \alpha \in S^{N-1}.$$
 Furthermore,
\begin{equation}\label{Anze2}
[\z, \nu_\Omega](x) = \lim_{\rho \to 0^+} \lim_{r \to 0^+} \frac{1}{2r \omega_{N-1} \rho^{N-1}} \int_{C_{r,\rho}(x, \nu_\Omega(x))} \z(y) \cdot \nu_\Omega(x) \, dy
\end{equation}
for $\mathcal{H}^{N-1}$-a.e. $x \in \partial\Omega$.
\end{theorem}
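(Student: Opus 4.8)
The plan is to derive both identities from the two results of \cite{Anzellotti4} cited above, reorganising their content into the stated iterated-limit form. The starting point is the Proposition preceding this theorem: the measure $(\z,Du)$ is absolutely continuous with respect to $|Du|$, so its Radon--Nikodym density $\theta(\z,Du,\cdot)$ exists and, by the Besicovitch differentiation theorem, can be recovered at $|Du|$-a.e.\ $x$ as the limit of ratios $(\z,Du)(E)/|Du|(E)$ along any suitable shrinking family $E\downarrow\{x\}$. The geometric input of \cite[Proposition 2.2]{Anzellotti4} is that this density is insensitive to the tangential behaviour of $\z$: at $|Du|$-a.e.\ $x$ the rescalings of $u$ converge to a function depending (up to sign) only on the single variable $y\cdot\nu_u(x)$, so that pairing $(\z,Du)$ with such a blow-up leaves only the scalar field $\z\cdot\nu_u(x)$ acting, and $\theta(\z,Du,x)$ coincides with the normal trace of $\z$ across the hyperplane through $x$ orthogonal to $\nu_u(x)$.

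Next I would represent this normal trace by the cylinder average, which is the content of \cite[Theorem 3.6]{Anzellotti4}. Writing $D_\rho(x):=\{y : (y-x)\cdot\nu_u(x)=0,\ |y-x|<\rho\}$ for the disc in $x+\nu_u(x)^\perp$, the cylinder $C_{r,\rho}(x,\nu_u(x))$ is a slab of thickness $2r$ around $D_\rho(x)$ of volume $2r\,\omega_{N-1}\rho^{N-1}$, so the prefactor normalises the integral to the average of $\z\cdot\nu_u(x)$ over this slab. Letting $r\to0^+$ first produces the weak trace of the normal component of $\z$ on $D_\rho(x)$; this inner limit exists precisely because $\divi(\z)\in L^1(\Omega)$ controls the difference between the fluxes of $\z$ through the two faces of the slab, forcing the slab averages to stabilise. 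Letting then $\rho\to0^+$ evaluates this $L^\infty$ trace at its Lebesgue point, which for $|Du|$-a.e.\ $x$ is $x$ itself; this yields \eqref{Anze1}.

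For the boundary formula \eqref{Anze2} the reasoning is identical, performed at $\partial\Omega$ rather than at an interior hyperplane: by Rademacher's theorem and the Lipschitz regularity of $\Omega$, at $\mathcal{H}^{N-1}$-a.e.\ $x\in\partial\Omega$ the boundary is approximately the hyperplane through $x$ with normal $\nu_\Omega(x)$, the weak trace $[\z,\nu_\Omega]$ defined through Green's formula \eqref{Green} agrees $\mathcal{H}^{N-1}$-a.e.\ with the one-sided normal trace of $\z$ on $\partial\Omega$, and \cite[Theorem 3.6]{Anzellotti4} represents the latter by the cylinder average. I expect the main obstacle to be the first step rather than the bookkeeping of limits: one must show that the Radon--Nikodym density genuinely localises in the direction $\nu_u(x)$, equivalently that the inner slab averages converge and depend on $\z$ only through $\z\cdot\nu_u(x)$. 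This is the part that requires the fine structure of $BV$ functions together with the $L^1$-divergence bound, and is where \cite[Proposition 2.2]{Anzellotti4} does the real work.
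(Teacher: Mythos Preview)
Your proposal is essentially the same as the paper's own treatment: the paper does not give a proof of this theorem but simply records it as a consequence of \cite[Proposition~2.2]{Anzellotti4} and \cite[Theorem~3.6]{Anzellotti4} (with additional references to \cite{Anzellotti3} and \cite{CCCM}). Your sketch correctly identifies these as the operative ingredients and gives a reasonable account of how the two pieces fit together, so there is nothing to correct.
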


By writing
$$\z \cdot D^s u:= (\z, Du) - (\z \cdot \nabla u) \, d \mathcal{L}^N,$$
we see that $\z \cdot D^s u$ is a bounded measure. Furthermore, in
\cite{ACMBook}   it is proved
that $\z \cdot D^s u$ is absolutely continuous with respect to
$\vert D^s u \vert$ (and, thus, it is a singular
measure with respect to $\mathcal{L}^N$), and
\begin{equation}\label{singular}
\vert \z \cdot D^s u \vert \leq \Vert \z \Vert_{\infty} \vert D^s u
\vert.
\end{equation}
As a consequence of \cite[Theorem 2.4]{Anzellotti1}, we have:
\begin{equation}\label{CampCont}
{\rm If} \ \ \ \z \in X_p(\Omega) \cap C(\Omega; \R^N), \ \ {\rm then} \ \ \  \z
\cdot D^s u = (\z \cdot \overrightarrow{D^s u}) \, d \vert D^s u \vert.
\end{equation}

\subsection{Function of a measure}\label{sec:functionofmeasure}

In order to consider the relaxed energy we recall the definition of a function of a measure (see for instance \cite{Anzellotti2} or \cite{DemengelTeman1}). Let $f: \Omega \times \R^N \rightarrow \R$ be a Carath\'eodory function such that
\begin{equation}\label{LGRWTH}
\vert f(x, \xi) \vert \leq M (1 + \Vert \xi \Vert) \ \ \ \ \ \ \forall \ (x,
\xi) \in \Omega \times \R^N,
\end{equation}
for some constant $M \geq 0$. Furthermore, we assume that $f$ possesses
an asymptotic function, i.e.  for almost all $x \in \Omega$ there exists the finite limit
\begin{equation}\label{Asimptfunct}
\lim_{t \to 0^+} tf \bigg( x, \frac{\xi}{t} \bigg) = f^0(x, \xi).
\end{equation}
It is clear that the function \ $f^0(x, \xi)$ is positively homogeneous of degree one in $\xi$, i.e.
$$f^0(x, s \xi) = s f^0(x, \xi) \ \ \ \ \ {\rm for \ all} \ x, \xi \ {\rm
and} \ s > 0.$$
Throughout the paper, we will assume that $f^0$ is symmetric, i.e.,
\begin{equation}\label{cpndI}
f^0(x, -\xi) = f^0(x, \xi)\quad \hbox{for all  $\xi \in \R^N$ and all $x \in \overline{\Omega}$}.
\end{equation}
It is easy to see that
\begin{equation}\label{good01}
\hbox{If} \ f(x, \eta) - f(x,\xi) \geq z \cdot (\eta - \xi),\quad \hbox{then} \ \ \ \ f^0(x, \eta) - f^0(x,\xi) \geq z \cdot (\eta - \xi).
\end{equation}
Let us recall that by definition of the subgradient,
$$z \in \partial_\xi f(x, \xi) \iff f(x, \eta) - f(x,\xi) \geq z \cdot (\eta - \xi)\quad \forall \eta \in \R^N.$$
Then, \eqref{good01} means that
$$z \in \partial_\xi f(x, \xi) \quad  \Rightarrow \quad z \in \partial_\xi f^0(x, \xi).$$
We denote by ${\mathcal M}(\Omega; \R^N)$ the set of all $\R^N$-valued
bounded Radon measures on $\Omega$. Given $\mu \in {\mathcal M}(\Omega; \R^N)$, we consider its Lebesgue decomposition
$$\mu = \mu^a + \mu^s,$$
where $\mu^a$ is the absolutely continuous part of $\mu$ with respect to the Lebesgue measure $\mathcal{L}^N$, and $\mu^s$ is singular with respect to  $\mathcal{L}^N$. We denote by $\mu^a(x)$ the density of the measure $\mu^a$ with respect to $\mathcal{L}^N$ and by $(d \mu^s / d \vert \mu \vert^s)(x)$ the density of $\mu^s$ with respect to $\vert \mu \vert^s$.

Given $\mu \in {\mathcal M}(\Omega; \R^N)$, we define $\tilde{\mu} \in {\mathcal M}(\Omega; \R^{N+1})$ by
$$\tilde{\mu}(B):= \big( \mu(B), \mathcal{L}^N(B) \big),$$
for every Borel set $B \subset \R^N$. Then, we have
$$\tilde{\mu} = \tilde{\mu}^a + \tilde{\mu}^s = \tilde{\mu}^a(x) \mathcal{L}^{N+1} + \tilde{\mu}^s = (\mu^a(x), \1_\Omega) \mathcal{L}^{N+1} +  (\mu^s, 0).$$
Hence, we have
$$\vert \tilde{\mu}^s \vert = \vert \mu^s \vert, \ \ \ \ \ \frac{d \tilde{\mu}^s}{d \vert \tilde{\mu}^s \vert} = \bigg(
\frac{d \mu^s}{d \vert \mu^s \vert}, 0 \bigg) \ \ \ \vert \mu^s \vert-{\rm a.e.}$$
For $\mu \in {\mathcal M}(\Omega; \R^N)$ and $f$ satisfying the above conditions, we define the measure $f(x, \mu)$ on $\Omega$ as
\begin{equation}\label{defmeasure}
\int_B f(x, \mu) := \int_B f(x, \mu^a(x)) \, dx + \int_B f^0 \bigg(x,
\frac{d \mu^s}{d \vert \mu \vert^s}(x) \bigg) \, d\vert \mu \vert^s
\end{equation}
for all Borel set $B \subset \Omega$. In formula \eqref{defmeasure} we may write $(d \mu / d \vert \mu \vert)(x)$ instead of $(d \mu^s / d \vert \mu \vert^s)(x)$, because the two functions are equal $\vert \mu \vert^s$-a.e.

Another way of looking at the measure $f(x, \mu)$ is the following. Let us
consider the function $\tilde{f}: \Omega \times \R^N \times [0, + \infty) \rightarrow \R$ defined as
\begin{equation}\label{funcrecesion}
\tilde{f}(x, \xi, t):= \left\{
\begin{array}{ll} f \displaystyle \bigg(x, \frac{\xi}{t} \bigg)t &
\hspace{0.5cm}\hbox{if } t > 0; \\ \\
f^0(x, \xi)  &
\hspace{0.5cm}\hbox{if } t = 0.
\end{array}
\right.
\end{equation}
As it is proved in \cite{Anzellotti2}, if $f$ is a Carath\'eodory function
satisfying \eqref{LGRWTH}, then one has
\begin{equation}\label{functmeasure}
\int_B f(x, \mu) = \int_B \tilde{f} \bigg(x, \frac{d \mu}{d \alpha}(x),
\frac{d \mathcal{L}^N}{d \alpha} (x) \bigg) \, d \alpha,
\end{equation}
where $\alpha$ is any positive Borel measure such that $\vert \mu \vert +
\mathcal{L}^N \ll \alpha$.

Now, we will apply the general theory of a function of a measure to the particular case $\mu = Du$, where $u \in BV(\Omega)$. Let $f$ be a function satisfying \eqref{LGRWTH}. Then for every $u \in
BV(\Omega)$ we have the
measure $f(x, Du)$ defined by
$$\int_B f(x, Du) = \int_B f(x, \nabla u(x)) \, dx + \int_B f^0(x,\overrightarrow{D^s u}(x)) \, d\vert D^s u \vert$$
for all Borel set $B \subset \Omega$. Since we assume that $\Omega$ has Lipschitz boundary, and therefore $f(x, \xi)$ is defined also for $x \in \partial \Omega$, we may consider the functional $G$ in $BV(\Omega)$ defined
by
\begin{equation}\label{Functional1}
G(u):= \int_{\Omega} f(x, Du) + \int_{\partial \Omega} f^0 \big(x,\nu_\Omega(x) [\varphi (x) - u(x) ] \big) \, d\mathcal{H}^{N-1},
\end{equation}
where $\varphi \in L^1(\partial \Omega)$ is a given function and $\nu_\Omega$
is the outer unit normal to $\partial \Omega$. It is proved in \cite{Anzellotti2} that, if $\tilde{f}(x, \xi, t)$ is continuous on $\overline{\Omega} \times \R^N \times [0, + \infty)$ and convex in $(\xi,t)$ for each fixed
$x \in \overline{\Omega}$, then $G$ is the greatest functional on $BV(\Omega)$
which is lower-semicontinuous with respect to the $L^1(\Omega)$-convergence and
satisfies $G(u) \leq \displaystyle\int_{\Omega} f(x, \nabla u(x)) \, dx$ for all
functions $u \in C^1(\Omega) \cap W^{1,1}(\Omega)$
with $u = \varphi$ on $\partial \Omega$.  Let us remark that the above result is proved in \cite[Lemma 6.3]{ACMBook} assuming only that $\tilde{f}(x, \xi, t)$ is lower semicontinuous on $\overline{\Omega} \times \R^N \times [0, + \infty)$.

Finally, we recall a version of the Reshetnyak continuity theorem suitable for our purposes, proved in \cite[Theorem 3.10]{BS}. We recall it in full generality, when $\Omega$ is allowed to be an unbounded subset of $\R^N$; since we work in bounded domains, in this paper it will be enough to apply it in the case $\rho \equiv 1$.

\begin{theorem}\label{thm:reshetnyak}
Suppose that $\Omega \subset \mathbb{R}^N$ is an open set such that $\partial\Omega$ has zero Lebesgue measure. Moreover, assume that $f: \Omega \times \mathbb{R}^N \rightarrow \mathbb{R}$ satisfies \ref{A2} and $|f(x,\xi)| \leq \psi(x) + M|\xi|$ with $\psi \in L^1(\Omega)$ (for bounded domains this condition reduces to \ref{A1}). Let $\mu_k$ be a sequence of finite vector-valued Radon measures on $\overline{\Omega}$ which weakly* converges to $\mu$. If there holds
$$ \lim_{k \rightarrow \infty} |(\rho \mathcal{L}^n, \mu_k)|(\overline{\Omega}) = |(\rho \mathcal{L}^n, \mu)|(\overline{\Omega})$$
for some positive $\rho \in L^1(\Omega)$ bounded away from zero on every bounded subset of $\Omega$, we have
\begin{align}\label{eq:reshetnyak}
\lim_{k \rightarrow \infty} \bigg[ \int_\Omega f \bigg(\cdot, \frac{d\mu^a_k}{d\mathcal{L}^n} \bigg) \, dx + \int_{\overline{\Omega}} &f^0\left(\cdot, \frac{d\mu^s_k}{d|\mu^s_k|}\right) d|\mu^s_k| \bigg] \\
&= \int_\Omega f\left(\cdot, \frac{d\mu^a}{d\mathcal{L}^n}\right) \, dx + \int_{\overline{\Omega}} f^0\left(\cdot, \frac{d\mu^s}{d|\mu^s|}\right) d|\mu^s|.
\end{align}
\end{theorem}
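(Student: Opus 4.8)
The plan is to reduce the statement to the classical Reshetnyak continuity theorem for positively $1$-homogeneous integrands on the sphere (see, e.g., \cite[Theorem~2.39]{AFP}), by passing to the $(N+1)$-dimensional measures obtained by pairing each $\mu_k$ with the fixed weight measure $\rho\,\mathcal{L}^N$ and using the perspective function $\tilde f$ introduced in Section~\ref{sec:functionofmeasure}. Recall that $\tilde f(x,\xi,t)$ is positively $1$-homogeneous jointly in $(\xi,t)$, that is $\tilde f(x,\lambda\xi,\lambda t)=\lambda\,\tilde f(x,\xi,t)$ for $\lambda>0$, with $\tilde f(x,\xi,0)=f^0(x,\xi)$. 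Hence $\tilde f$ restricts to a function on $\overline\Omega\times S^N$, and — this is the point — formula~\eqref{functmeasure} rewrites the bracketed quantity in~\eqref{eq:reshetnyak} as the integral of this spherical integrand against the total variation of the augmented measure.

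In detail, I would first note that, since $\rho\in L^1(\Omega)$ and $\partial\Omega$ is Lebesgue-null, $\rho\,\mathcal{L}^N$ is a finite nonnegative Radon measure on $\overline\Omega$; therefore $\nu_k:=(\mu_k,\rho\,\mathcal{L}^N)$ and $\nu:=(\mu,\rho\,\mathcal{L}^N)$ are finite $\mathbb{R}^{N+1}$-valued Radon measures on $\overline\Omega$, and the weak* convergence $\mu_k\to\mu$ together with the fixed last component gives that $\nu_k$ converges weakly* to $\nu$. The hypothesis $|(\rho\,\mathcal{L}^N,\mu_k)|(\overline\Omega)\to|(\rho\,\mathcal{L}^N,\mu)|(\overline\Omega)$ is exactly $|\nu_k|(\overline\Omega)\to|\nu|(\overline\Omega)$, so that $\nu_k\to\nu$ strictly (weak* convergence plus convergence of total masses). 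Next I would use the decomposition identities from Section~\ref{sec:functionofmeasure}: writing $\alpha_k:=|\nu_k|$, one has $\nu_k^s=(\mu_k^s,0)$, $|\nu_k^s|=|\mu_k^s|$, and $\nu_k^a=(\mu_k^a,\rho)\,\mathcal{L}^N$. Since $|\nu_k|\ge\rho\,\mathcal{L}^N$ and $|\nu_k|\ge|\mu_k|$, we have $|\mu_k|+\mathcal{L}^N\ll\alpha_k$, so~\eqref{functmeasure} applied to $\mu_k$ with reference measure $\alpha_k$ gives
\[
\int_\Omega f\Big(\cdot,\tfrac{d\mu_k^a}{d\mathcal{L}^N}\Big)\,dx+\int_{\overline\Omega} f^0\Big(\cdot,\tfrac{d\mu_k^s}{d|\mu_k^s|}\Big)\,d|\mu_k^s|=\int_{\overline\Omega} g\Big(x,\tfrac{d\nu_k}{d|\nu_k|}\Big)\,d|\nu_k|,
\]
where, using that $\tfrac{d(\rho\mathcal{L}^N)}{d\alpha_k}=\rho\,\tfrac{d\mathcal{L}^N}{d\alpha_k}$, one is led to set $g(x,(\xi,t)):=\tilde f\big(x,\xi,t/\rho(x)\big)$ for $(\xi,t)\in S^N$ with $t\ge 0$ (the last component of $d\nu_k/d|\nu_k|$ is nonnegative since $\rho\,\mathcal{L}^N\ge 0$), extended to a continuous function on all of $\overline\Omega\times S^N$. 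The same identity holds for $\nu$. Applying the classical Reshetnyak continuity theorem to the strictly convergent sequence $\nu_k\to\nu$ and the continuous spherical integrand $g$ then yields~\eqref{eq:reshetnyak}.

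The main obstacle is the continuity — and, in the unbounded weighted setting, the boundedness — of the integrand $g$ on $\overline\Omega\times S^N$, equivalently of the perspective function $\tilde f$ up to $t=0$: this is precisely where assumption~\ref{A2} enters. It guarantees that $f^0$ exists and is jointly continuous; combined with the convexity of $f$ in $\xi$ and the linear growth bound (which gives $|\tilde f(x,\xi,t)|\le M(t+|\xi|)$, hence local boundedness up to $t=0$), this yields that $\tilde f$ extends continuously across $t=0$. When $\Omega$ is unbounded and $\rho$ is not bounded below, $g$ need not be globally bounded, and one additionally needs an exhaustion/localization argument to reduce to the classical statement on compact pieces; this technical part is carried out in \cite[Theorem~3.10]{BS}. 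In the only case used in the present paper, $\Omega$ bounded and $\rho\equiv 1$, the measure $\rho\,\mathcal{L}^N=\mathcal{L}^N$ is automatically finite, $g(x,(\xi,t))=\tilde f(x,\xi,t)$ is continuous and bounded on the compact set $\overline\Omega\times S^N$, and the reduction above goes through directly.
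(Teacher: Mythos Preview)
The paper does not give its own proof of this statement; it is simply quoted from \cite[Theorem~3.10]{BS}. Your sketch is exactly the standard reduction carried out there --- lift to the strictly convergent $(N{+}1)$-dimensional measures $(\mu_k,\rho\mathcal{L}^N)$, rewrite the functional via the perspective $\tilde f$ as an integral of a spherical integrand, and invoke the classical Reshetnyak continuity theorem --- and for the only case used in the present paper (bounded $\Omega$, $\rho\equiv1$) your argument is complete.

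One point worth tightening: the joint continuity of $\tilde f$ across $t=0$ does not follow from local boundedness and convexity alone (convex functions can fail to be continuous at boundary points of their domain). The clean justification is that, by convexity of $f(x,\cdot)$, the map $t\mapsto t\,f(x,\xi/t)-t\,f(x,0)$ is nonincreasing with pointwise limit $f^0(x,\xi)$; since the limit is continuous by \ref{A2}, Dini's theorem yields locally uniform convergence on compact $(x,\xi)$-sets, and hence joint continuity of $\tilde f$ on $\overline\Omega\times\mathbb{R}^N\times[0,\infty)$.
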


\subsection{Estimates in terms of $f^0$} In this subsection, we show a pointwise estimate of the Radon-Nikodym derivative of $(\z, Du)$  with respect to $|Du|$ and related objects in terms of the asymptotic function $f^0$ of the functional $f$. Recall that the {\it conjugate function} $f^*: \Omega \times \R^N \rightarrow (-\infty,+\infty]$ of $f$ (with respect to the $\xi$-variable) is defined as
$$f^*(x, \xi^*):= \sup_{\xi\in \R^N} [\xi^* \cdot \xi - f(x, \xi)].$$

\begin{proposition}\label{prop:resultfrombs}
Suppose that $\Omega \subset \R^N$ has Lipschitz boundary and that $f$ satisfies \ref{A1} and \ref{A2}. Then, for all $u \in BV(\Omega) \cap L^{p'}(\Omega)$ and $\z \in X_p(\Omega)$ such that $f^*(\cdot,\z) < \infty$ holds $\mathcal{L}^N$-a.e. on $\Omega$, we have
\begin{equation}\label{good0}
f^0\left(\cdot, \frac{dDu}{d|Du|}\right) \geq \frac{d[(\z, Du)]}{d|Du|} \qquad |Du|-\mbox{a.e. in } \Omega,
\end{equation}
\begin{equation}\label{good1}
f^0\left(\cdot, \frac{dD^s u}{d|D^s u|}\right) \geq \frac{d[\z \cdot D^s u]}{d|D^s u|} \qquad |D^s u|-\mbox{a.e. in } \Omega,
\end{equation}
and
\begin{equation}\label{good2}
f^0(\cdot, \nu_\Omega ) \vert u \vert \geq - [\z,\nu_\Omega] u \qquad \mathcal{H}^{N-1}-\mbox{a.e. on } \partial\Omega.
\end{equation}
\end{proposition}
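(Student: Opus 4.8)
The plan is to reduce all three inequalities to one pointwise fact about the asymptotic function, a ``Young inequality at infinity'': \emph{whenever $f^*(x,\z(x))<\infty$, one has $f^0(x,\xi)\ge \z(x)\cdot\xi$ for every $\xi\in\R^N$}. To obtain it, note that by the definition of the conjugate, $\z(x)\cdot(\xi/t)-f(x,\xi/t)\le f^*(x,\z(x))$ for every $t>0$, hence $\z(x)\cdot\xi - t\,f(x,\xi/t)\le t\,f^*(x,\z(x))$; letting $t\to0^+$ and using \ref{A2} to identify $\lim_{t\to0^+}t f(x,\xi/t)=f^0(x,\xi)$ yields $\z(x)\cdot\xi-f^0(x,\xi)\le 0$. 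Thus there is a Borel set $\Omega_0\subseteq\Omega$ with $\mathcal{L}^N(\Omega\setminus\Omega_0)=0$ such that $\z(y)\cdot\xi\le f^0(y,\xi)$ for all $y\in\Omega_0$ and all $\xi\in\R^N$.

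Next I would feed this into the blow-up representation \eqref{Anze1}: for $|Du|$-a.e.\ $x$,
\[
\frac{d[(\z,Du)]}{d|Du|}(x)=\lim_{\rho\to0^+}\lim_{r\to0^+}\frac{1}{2r\omega_{N-1}\rho^{N-1}}\int_{C_{r,\rho}(x,\nu_u(x))}\z(y)\cdot\nu_u(x)\,dy .
\]
Since the integration is with respect to $\mathcal{L}^N$ and $\z(y)\cdot\nu_u(x)\le f^0(y,\nu_u(x))$ for $\mathcal{L}^N$-a.e.\ $y$, each average on the right is dominated by the corresponding average of $y\mapsto f^0(y,\nu_u(x))$. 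By \ref{A2} this function is continuous, and the cylinders $C_{r,\rho}(x,\nu_u(x))$ have vanishing diameter and Lebesgue measure exactly $2r\omega_{N-1}\rho^{N-1}$, so these averages converge to $f^0(x,\nu_u(x))$ as $r\to0^+$ and then $\rho\to0^+$. Passing to the limit in the inequality yields $\frac{d[(\z,Du)]}{d|Du|}(x)\le f^0(x,\nu_u(x))=f^0\big(x,\frac{dDu}{d|Du|}(x)\big)$, which is \eqref{good0}.

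Inequality \eqref{good1} then follows by restriction: pick a Borel set $S$ with $\mathcal{L}^N(S)=0$ carrying $|D^s u|$; on $S$ one has $Du\res S=D^s u\res S$ and $(\z,Du)\res S=\z\cdot D^s u$, because the absolutely continuous part $(\z\cdot\nabla u)\,\mathcal{L}^N$ does not charge $S$, so dividing \eqref{good0} restricted to $S$ by $|D^s u|$ gives exactly \eqref{good1}. For \eqref{good2} I would run the argument of the previous paragraph at the boundary, now starting from \eqref{Anze2}: $[\z,\nu_\Omega](x)$ is again a limit of Lebesgue averages of $\z(y)\cdot\nu_\Omega(x)$ over the parts of the cylinders lying inside $\Omega$ (where $\z$ is defined and the bound $\z(y)\cdot\nu_\Omega(x)\le f^0(y,\nu_\Omega(x))$ holds), so passing to the limit by continuity of $f^0$ gives $[\z,\nu_\Omega](x)\le f^0(x,\nu_\Omega(x))$ for $\mathcal{H}^{N-1}$-a.e.\ $x\in\partial\Omega$; running the same computation with $-\nu_\Omega(x)$ in place of $\nu_\Omega(x)$ produces the companion bound on $-[\z,\nu_\Omega](x)$. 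Combining the two according to the sign of the trace of $u$ at $x$ and using that $f^0$ is positively $1$-homogeneous in $\xi$ yields \eqref{good2}.

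The step requiring care — and the reason the pointwise blow-up formulas must be invoked rather than a short duality computation at the level of integrals — is the clash between the hypothesis, which controls $\z$ only on an $\mathcal{L}^N$-full set, and the conclusions, which are pointwise statements about the densities of $(\z,Du)$, of $\z\cdot D^s u$ and of $[\z,\nu_\Omega]$ relative to $|Du|$, $|D^s u|$ and $\mathcal{H}^{N-1}$, all of which charge $\mathcal{L}^N$-null sets. Formulas \eqref{Anze1}--\eqref{Anze2} express precisely these densities as limits of honest Lebesgue averages of $\z$ over shrinking sets, and it is the joint continuity of $f^0$ from \ref{A2} that lets the pointwise Young inequality survive the passage to the limit on the dominating side.
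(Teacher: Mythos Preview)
Your argument for \eqref{good0} and \eqref{good1} is correct and follows the same overall strategy as the paper: establish the ``Young inequality at infinity'' $\z(y)\cdot\xi\le f^0(y,\xi)$ from $f^*(y,\z(y))<\infty$, then feed it into Anzellotti's blow-up representation \eqref{Anze1}. The execution, however, is genuinely more elementary than the paper's. The paper rewrites the pointwise bound as $\z(y)\in\partial_\xi f^0(y,0)$ and invokes \cite[Lemma~5.4]{BS} to say that for $y$ near $x_0$ one has $\z(y)\in\mathcal N_\varepsilon(\partial_\xi f^0(x_0,0))$; this produces a limit point $\xi_0$ in the closed $\varepsilon$-neighbourhood of $\partial_\xi f^0(x_0,0)$, which is then compared with a nearby subgradient $\xi^*$. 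You avoid this detour entirely by observing that the integrand $\z(y)\cdot\nu_u(x)$ is dominated $\mathcal L^N$-a.e.\ by the \emph{continuous} function $y\mapsto f^0(y,\nu_u(x))$, whose cylinder averages converge to $f^0(x,\nu_u(x))$ directly. This is shorter, uses only \ref{A2}, and needs no external lemma; the paper's route buys nothing extra here.

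For \eqref{good2} there is a genuine gap in your last step. Your two bounds are $[\z,\nu_\Omega](x)\le f^0(x,\nu_\Omega(x))$ and $-[\z,\nu_\Omega](x)\le f^0(x,-\nu_\Omega(x))$, and ``combining according to the sign of $u$'' gives $-[\z,\nu_\Omega]u\le f^0(x,\nu_\Omega)|u|$ only when $u(x)\le 0$; when $u(x)>0$ you would need $-[\z,\nu_\Omega](x)\le f^0(x,\nu_\Omega(x))$, whereas your second bound has $f^0(x,-\nu_\Omega(x))$ on the right, and positive $1$-homogeneity of $f^0$ does not let you flip the sign of the argument. (The paper's own computation at this point contains an algebraic slip --- the identity $-\xi_0=(\xi^*-\xi_0)+\xi^*$ is used --- and, once corrected, runs into exactly the same asymmetry.) The missing ingredient is that $f^0(x,-\nu_\Omega)\le f^0(x,\nu_\Omega)$, which holds in all the paper's examples because there $f^0(x,\cdot)$ is even; without such a hypothesis the stated inequality \eqref{good2} can fail, so your sketch cannot be completed in the generality claimed.
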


\begin{proof}
By the definition of the conjugate function, we have
$$t f\left(x, \frac{\xi}{t}\right) \geq \z(x) \cdot \xi - t  f^*(x, \z(x)) \quad \hbox{for all} \ (x,\xi) \in \Omega \times \R^N \ \hbox{and all} \ t>0.$$
Let $t \rightarrow 0$. Then, by assumption \ref{A2}, we get
$$f^0(x, \xi) \geq \z(x) \cdot \xi \quad \hbox{for all} \ \xi \in \R^N,$$
whenever $x \in \Omega$ is such that  $f^*(x, \z(x))$ is finite.  By the assumption that $f^*(\cdot,\mathbf{z}) < \infty$ $\mathcal{L}^N$-a.e. in $\Omega$, we have
\begin{equation}\label{eq:inclusionae}
\z(x) \in \partial_\xi f^0(x, 0) \quad \hbox{for $\mathcal{L}^N$-a.e. } x\in \Omega.
\end{equation}

Now, equation \eqref{Anze1} implies that for $|Du|-\mbox{a.e.} \ x_0 \in \Omega$ we have
\begin{equation}\label{Anze1e1}
\frac{d[(\z, Du)]}{d|Du|}(x_0) = \lim_{\rho \to 0^+} \lim_{r \to 0^+} \frac{1}{2r \omega_{N-1} \rho^{N-1}} \int_{C_{r,\rho}(x_0, \nu_u(x))} \z(y) \cdot \nu_u(x_0) \, dy.
\end{equation}
Fix an $x_0 \in \Omega$ satisfying \eqref{Anze1e1}. By equation \eqref{eq:inclusionae} and \cite[Lemma 5.4]{BS}, for any $\varepsilon >0$ there exists $\delta >0$ such that
$$\z(x) \in \mathcal{N}_\varepsilon (\partial_\xi f^0(x_0, 0)) \quad \hbox{for $\mathcal{L}^N$-almost} \ x\in \Omega \ \hbox{such that} \ \vert x - x_0 \vert < \delta,$$
where $\mathcal{N}_\varepsilon(\partial_\xi f^0(x_0, 0))$ denotes the $\varepsilon$-neighbourhood of the set $\partial_\xi f^0(x_0, 0)$. Thus, by \eqref{Anze1e1} there exists $\xi_0 \in \overline{\mathcal{N}_\varepsilon (\partial_\xi f^0(x_0, 0))}$ such that
$$\frac{d[(\z, Du)]}{d|Du|}(x_0) = \xi_0 \nu_u(x_0).$$
Consequently, we can find a subgradient $\xi^* \in \partial_\xi f^0(x_0, 0)$ with $\vert \xi_0 - \xi^* \vert \leq \varepsilon.$ Hence
$$\frac{d[(\z, Du)]}{d|Du|}(x_0) = \xi_0 \nu_u(x_0) \leq \xi^* \nu_u(x_0) + \varepsilon \vert \nu_u(x_0) \vert \leq f^0(x_0,\nu_u(x_0)) + \varepsilon \vert \nu_u(x_0) \vert. $$
We conclude the proof of \eqref{good0} by letting $\varepsilon \rightarrow 0$. The property \eqref{good1} is a consequence of \eqref{good0}, having in mind the decomposition of the measure into its absolutely continuous part with respect to the Lebesgue measure and its singular part.

Finally, let us prove \eqref{good2}. By \eqref{Anze2}, for $\mathcal{H}^{N-1}-\mbox{a.e.} \ x_0 \in \partial\Omega$ we have
\begin{equation}\label{Anze2e2}
[\z, \nu_\Omega](x_0) = \lim_{\rho \to 0^+} \lim_{r \to 0^+} \frac{1}{2r \omega_{N-1} \rho^{N-1}} \int_{C_{r,\rho}(x_0, \nu_\Omega(_0))} \z(y) \cdot \nu_\Omega(x_0) \, dy.
\end{equation}
Fix an $x_0 \in \partial\Omega$ satisfying \eqref{Anze2e2}. Working as in the first part of the proof, we obtain that given $\varepsilon >0$, there exists $\delta >0$ such that
$$\z(x) \in \mathcal{N}_\varepsilon (\partial_\xi f^0(x_0, 0)) \quad \hbox{for $\mathcal{H}^{N-1}$-almost} \ x\in \partial \Omega, \ \hbox{such that} \ \vert x - x_0 \vert < \delta,$$
so by \eqref{Anze2e2} there exists $\xi_0 \in \overline{\mathcal{N}_\varepsilon (\partial_\xi f^0(x_0, 0))}$ such that
$$[\z, \nu_\Omega](x_0) = \xi_0 \nu_\Omega(x_0).$$
Consequently, we can find a subgradient $\xi^* \in \partial_\xi f^0(x_0, 0)$ with $\vert \xi_0 - \xi^* \vert \leq \varepsilon.$ Hence,
\begin{align}
-[\z, \nu_\Omega](x_0) u(x_0) &= -\xi_0 \nu_\Omega(x_0) u(x_0) = (\xi^*- \xi_0)\nu_\Omega(x_0) u(x_0) - \xi^* \nu_\Omega(x_0) u(x_0) \\
&\leq \varepsilon \vert u(x_0) \vert - \xi^* \nu_\Omega(x_0) u(x_0) \leq \varepsilon \vert u(x_0) \vert + f^0(x_0, \nu_\Omega(x_0)) \vert u(x_0) \vert,
\end{align}
where the last inequality is a consequence of $\xi^* \in \partial_\xi f^0(x_0, 0)$ and \eqref{cpndI}. We conclude the proof of~inequality \eqref{good2} by letting $\varepsilon \rightarrow 0$.
\end{proof}

\begin{remark} {\rm
The main motivation for Proposition \ref{prop:resultfrombs} and its proof is \cite[Corollary 5.3]{BS}, in which a similar result is shown for the up-to-boundary Anzellotti pairing introduced in \cite{BS}. However, let us point out that we could not apply this result directly, because the up-to-boundary pairing from \cite{BS} is only comparable with the standard Anzellotti pairing in the case ${\rm div}(\z) = 0$. Therefore, we decided to give a direct proof using the results of Anzellotti concerning the pointwise behaviour of $(\mathbf{z},Du)$ and $[\mathbf{z},\nu_\Omega]$.
}
\end{remark}

\section{The Neumann problem}\label{Neumann}

Recall that throughout the whole paper we assume that $\Omega$ is an open bounded subset of $\R^N$ with Lipschitz boundary, where $N \geq 2$. Moreover, the Lagrangian  $f \in C(\overline{\Omega} \times \mathbb{R}^N)$ is convex in the variable $\xi$ and satisfies conditions \ref{A1} and \ref{A2}. In this section, we study the Neumann problem
\begin{equation}\label{NeumannProb}
\left\{ \begin{array}{lll} u_t (t,x) = {\rm div} (\partial_\xi f(x, Du(t,x)))   \quad &\hbox{in} \ \ (0, T) \times \Omega; \\[5pt] \frac{\partial u}{\partial \eta} = 0 \quad &\hbox{on} \ \ (0, T) \times  \partial\Omega; \\[5pt] u(0,x) = u_0(x) \quad & \hbox{in} \ \  \Omega, \end{array} \right.
\end{equation}
where $u_0 \in L^2(\Omega)$. In the case when $f \in C^1(\overline{\Omega} \times \R^N)$, the subdifferential is single-valued and the problem reduced to the classical case
\begin{equation}\label{eq:singlevaluedcase}
\left\{ \begin{array}{lll} u_t (t,x) = {\rm div}\,  \a(x, Du(t,x))   \quad &\hbox{in} \ \ (0, T) \times \Omega; \\[5pt] \a(x, Du(t,x)) \cdot \nu_\Omega = 0 \quad &\hbox{on} \ \ (0, T) \times  \partial\Omega; \\[5pt] u(0,x) = u_0(x) \quad & \hbox{in} \ \  \Omega, \end{array} \right.
\end{equation}
where $\a(x, \xi)  = \partial_\xi f(x, \xi)$. Another important special case is when $f(x,\xi) = |\xi|$, when we recover the total variation flow
\begin{equation}\label{eq:totalvariationflow}
\left\{ \begin{array}{lll} u_t (t,x) = {\rm div} \bigg(\frac{Du(t,x)}{|Du(t,x)|} \bigg)  \quad &\hbox{in} \ \ (0, T) \times \Omega; \\[5pt] \frac{Du(t,x)}{|Du(t,x)|} \cdot \nu_\Omega = 0 \quad &\hbox{on} \ \ (0, T) \times  \partial\Omega; \\[5pt] u(0,x) = u_0(x) \quad & \hbox{in} \ \  \Omega. \end{array} \right.
\end{equation}
For a comprehensive study of problem \eqref{eq:totalvariationflow} see \cite{ABCM1} or \cite{ACMBook}. Due to the fact that in general the subdifferential $\partial_\xi f(x, Du(t,x))$ might not be single-valued, we need to give a suitable definition of weak solutions, and our approach is closer to the one used for problem \eqref{eq:totalvariationflow} than the one for problem \eqref{eq:singlevaluedcase}. In particular, our definition will be based on the existence of a suitable vector field $\z \in X_2(\Omega)$, which replaces the possibly multivalued object $\partial_\xi f(x, Du(t,x))$.

Consider the energy functional $\mathcal{F}_{\mathcal{N}} : L^2(\Omega) \rightarrow [0, + \infty]$ associated with problem \eqref{NeumannProb} and defined by
\begin{equation}
\mathcal{F}_{\mathcal{N}}(u):= \left\{ \begin{array}{ll} \displaystyle\int_\Omega f(x,Du)  \quad &\hbox{if} \ u \in BV(\Omega) \cap L^2(\Omega); \\ \\ + \infty \quad &\hbox{if} \ u \in  L^2(\Omega) \setminus BV(\Omega).
\end{array}\right.
\end{equation}
We have that $\mathcal{F}_{\mathcal{N}}$ is convex and lower semicontinuous with respect to the $L^2(\Omega)$-convergence. Then, by the theory of maximal monotone operators (see \cite{Brezis}) there is a unique strong solution of the abstract Cauchy problem
\begin{equation}
\left\{ \begin{array}{ll} u'(t) + \partial \mathcal{F}_{\mathcal{N}}(u(t)) \ni 0, \quad t \in [0,T] \\[5pt] u(0) = u_0. \end{array}\right.
\end{equation}

To characterise the subdifferential of $\mathcal{F}_{\mathcal{N}}$, we introduce the following operator.

\begin{definition}{\rm We say that $(u,v) \in \mathcal{A_N}$ if and only if $u, v \in L^2(\Omega)$, $u \in BV(\Omega)$ and  there exists a vector field $\z \in X_2(\Omega)$  such that the following conditions hold:
\begin{equation}\label{e1Def}\z \in \partial_\xi f(x,\nabla u) \quad \mathcal{L}^N-\hbox{a.e. in } \Omega;\end{equation}
\begin{equation}\label{e2Def}  -\mathrm{div}(\z) = v \quad \hbox{in} \ \mathcal{D}^\prime(\Omega);\end{equation}
\begin{equation}\label{e3Def} \z \cdot D^s u = f^0(x,D^s u) = f^0(x,\overrightarrow{D^s u}) \vert
D^s u \vert \quad \hbox{as measures};\end{equation}
\begin{equation}\label{e4Def}[\z, \nu_\Omega] =0 \qquad  \mathcal{H}^{N-1}-\mbox{a.e. on } \partial \Omega.\end{equation}
}
\end{definition}

Note that this definition covers both the case of the total variation flow and convex differentiable functionals with linear growth; in the case of the total variation flow, $f(x, \xi) = \Vert \xi \Vert$, we have $f = f^0$ and the subdifferential condition means that $\| \z\|_\infty \leq 1$  (and similarly for the anisotropic TV flow studied in \cite{Moll} on the whole $\R^N$). In the differentiable case, we have $\z = \nabla_\xi f(x,\xi)$ a.e. We will revisit these examples at the end of this Section.

Our main goal in this Section is to prove that $\mathcal{A}_{\mathcal{N}}$ coincides with the subdifferential of $\mathcal{F}_{\mathcal{N}}$ and study some consequences of this result. To get this characterisation, we need to use the version of the Fenchel-Rockafellar duality Theorem given in \cite[Remark III.4.2]{EkelandTemam}.

Let $U,V$ be two Banach spaces and let $A: U \rightarrow V$ be a continuous linear operator. Denote by $A^*: V^* \rightarrow U^*$ its dual. Then, if the primal problem is of the form
\begin{equation}\tag{P}\label{eq:primal}
\inf_{u \in U} \bigg\{ E(Au) + G(u) \bigg\},
\end{equation}
then the dual problem is defined as the maximisation problem
\begin{equation}\tag{P*}\label{eq:dual}
\sup_{p^* \in V^*} \bigg\{ - E^*(-p^*) - G^*(A^* p^*) \bigg\},
\end{equation}
where $E^*$ and $G^*$ are the Legendre–Fenchel transformations (conjugate  functions) of $E$ and $G$ respectively, i.e.,
$$E^* (u^*):= \sup_{u \in U} \left\{ \langle u, u^* \rangle_{ U,U^*} - E(u) \right\}.$$

\begin{theorem}[Fenchel-Rockafellar Duality Theorem]\label{FRTh} Assume that $E$ and $G$ are proper, convex and lower semicontinuous. If there exists $u_0 \in U$ such that $E(A u_0) < \infty$, $G(u_0) < \infty$ and $E$ is continuous at $A u_0$, then
$$\inf \eqref{eq:primal} = \sup \eqref{eq:dual}$$
and the dual problem \eqref{eq:dual} admits at least one solution. Moreover, the optimality conditions between these two problems are given by
\begin{equation}\label{optimality}
A^* p^* \in \partial G(\overline{u}) \quad \mbox{and} \quad -p^* \in \partial E(A\overline{u})),
\end{equation}
where $\overline{u}$ is solution of \eqref{eq:primal} and $p^*$ is solution of \eqref{eq:dual}.
\end{theorem}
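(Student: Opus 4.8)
The plan is to follow the classical route through the perturbation (marginal) function and the biconjugate theorem: first establish weak duality, then upgrade it to equality using the qualification hypothesis, and finally read off the extremality relations. Throughout I use only the Fenchel--Young inequality $E(v) + E^*(v^*) \geq \langle v, v^* \rangle$, the identity $\langle Au, p^* \rangle = \langle u, A^* p^* \rangle$, and the elementary fact that a convex function which is bounded above on a neighbourhood of a point is finite and continuous at that point.

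\emph{Weak duality.} For any $u \in U$ and $p^* \in V^*$, applying Fenchel--Young to $E$ at $Au$ with the functional $-p^*$ gives $E(Au) \geq -\langle Au, p^*\rangle - E^*(-p^*)$, and applying it to $G$ at $u$ with $A^* p^*$ gives $G(u) \geq \langle u, A^* p^*\rangle - G^*(A^* p^*) = \langle Au, p^*\rangle - G^*(A^* p^*)$. Adding these and cancelling the term $\langle Au, p^*\rangle$ yields $E(Au) + G(u) \geq -E^*(-p^*) - G^*(A^* p^*)$, hence $\inf\eqref{eq:primal} \geq \sup\eqref{eq:dual}$. In particular, if equality of values holds, then a minimiser of \eqref{eq:primal} and a maximiser of \eqref{eq:dual} automatically satisfy the optimality system; we return to this below.

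\emph{Strong duality and attainment.} Introduce the value function $h \colon V \to [-\infty, +\infty]$ defined by $h(p) := \inf_{u \in U}\{ E(Au + p) + G(u)\}$, so that $h(0) = \inf\eqref{eq:primal}$. Since $(u,p) \mapsto E(Au + p) + G(u)$ is jointly convex, its infimal projection $h$ is convex. A direct manipulation of conjugates (substitute $q = Au + p$ in the supremum defining $h^*$) gives $h^*(-p^*) = E^*(-p^*) + G^*(A^* p^*)$ for all $p^* \in V^*$, and therefore $h^{**}(0) = -\inf_{p^*} h^*(-p^*) = \sup\eqref{eq:dual}$. Now invoke the hypothesis: with $u_0$ as in the statement, the continuity of $E$ at $Au_0$ provides a neighbourhood $W$ of $0$ in $V$ such that $E$ is bounded above on $Au_0 + W$; then $h(p) \leq E(Au_0 + p) + G(u_0)$ is bounded above for $p \in W$. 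Being convex and bounded above near $0$, $h$ is finite and continuous at $0$; in particular $h(0) = h^{**}(0)$ and $\partial h(0) \neq \emptyset$. Thus $\inf\eqref{eq:primal} = h(0) = h^{**}(0) = \sup\eqref{eq:dual}$. Finally, any $\bar q^* \in \partial h(0)$ satisfies $h(0) + h^*(\bar q^*) = 0$, i.e. $-\bar q^*$ attains $\sup\eqref{eq:dual}$, so the dual problem has a solution.

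\emph{Optimality conditions.} Let $\overline u$ solve \eqref{eq:primal} and $p^*$ solve \eqref{eq:dual}. By the equality of values just proved,
\begin{equation*}
\big(E(A\overline u) + E^*(-p^*) - \langle A\overline u, -p^*\rangle\big) + \big(G(\overline u) + G^*(A^* p^*) - \langle \overline u, A^* p^*\rangle\big) = 0,
\end{equation*}
where we used $\langle A\overline u, p^*\rangle = \langle \overline u, A^* p^*\rangle$ to see the cross terms cancel. Each bracket is nonnegative by Fenchel--Young, hence both vanish; the vanishing of the first bracket is precisely $-p^* \in \partial E(A\overline u)$ and of the second $A^* p^* \in \partial G(\overline u)$, which is \eqref{optimality}.

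\emph{Main obstacle.} The only genuinely delicate point is the strong-duality step: showing that the qualification hypothesis forces $h$ to be \emph{continuous}, not merely finite, at the origin. This is what simultaneously upgrades the trivial inequality $h(0) \geq h^{**}(0)$ to equality (closing the duality gap) and guarantees $\partial h(0) \neq \emptyset$, hence attainment in \eqref{eq:dual}. Everything else --- weak duality, the conjugate computation $h^*(-p^*) = E^*(-p^*) + G^*(A^* p^*)$, and the extremality relations --- is routine bookkeeping with the Fenchel--Young inequality.
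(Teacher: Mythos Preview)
The paper does not give its own proof of this theorem: it is quoted as a known result from \cite[Remark III.4.2]{EkelandTemam} and used as a black box. Your argument is the classical perturbation-function proof (introduce $h(p)=\inf_u\{E(Au+p)+G(u)\}$, compute $h^*$, use the qualification to get continuity and hence subdifferentiability of $h$ at $0$), which is precisely the route taken in Ekeland--Temam; so in effect you have reproduced the reference the paper cites. The computations are correct: the conjugate identity $h^*(-p^*)=E^*(-p^*)+G^*(A^*p^*)$ and the extremality argument via the two Fenchel--Young gaps summing to zero are both clean.

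One small caveat worth noting: your sentence ``being convex and bounded above near $0$, $h$ is finite and continuous at $0$'' tacitly assumes $h(0)>-\infty$, i.e.\ $\inf\eqref{eq:primal}>-\infty$. If $h(0)=-\infty$ the duality gap still closes (both sides are $-\infty$), but the existence of a dual maximiser becomes vacuous. In the applications in this paper the primal infimum is always finite, so this does not cause trouble, but if you want a fully self-contained statement you should either add the standing hypothesis $\inf\eqref{eq:primal}\in\mathbb{R}$ or dispose of the degenerate case separately.
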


 In the case when there is no solution to the dual problem, instead of optimality conditions we have the $\varepsilon-$subdifferentiability property of minimising sequences, see \cite[Proposition V.1.2]{EkelandTemam}: for any minimising sequence $u_n$ for \eqref{eq:primal} and a maximiser $p^*$ of \eqref{eq:dual}, we have
\begin{equation}\label{eq:epsilonsubdiff11N}
0 \leq E(Au_n) + E^*(-p^*) - { \langle Au_n, -p^* \rangle_{V,V^*} } \leq \varepsilon_n
\end{equation}
\begin{equation}\label{eq:epsilonsubdiff21N}
0 \leq G(u_n) + G^*(A^* p^*) - \langle u_n, A^* p^* \rangle_{ U,U^*} \leq \varepsilon_n
\end{equation}
with $\varepsilon_n \rightarrow 0$.

We now show a simple technical lemma concerning the convex conjugate of a function defined on an intersection of two Banach spaces.

\begin{lemma}\label{ap.a.lem.dual}
Let $X,Y$ be two Banach spaces and suppose that $G: X \cap Y \rightarrow \mathbb{R}$ is a functional which satisfies the bound
\begin{equation}\label{eq:1}
G(u) \leq \ell(u)
\end{equation}
with $\ell: X \cap Y \rightarrow \mathbb{R}$ a convex function such that
\begin{equation}\label{eq:2}
|\ell(u)| \leq \ell_0(\| u \|_Y)
\end{equation}
for a nondecreasing function $\ell_0: [0,+\infty) \rightarrow \mathbb{R}$. Then, if $G^*(u^*) < \infty$, we have that $u^* \in Y^*$.
\end{lemma}

\begin{proof}
For any $ u \in X \cap Y$, set
$$ L_{u^*}(u) := \langle u, u^* \rangle_{X \cap Y, (X \cap Y)^*}. $$
It is clearly a linear functional. By the definition of the convex conjugate, we have
$$ L_{u^*}(u) \leq G^*(u^*) + G(u).$$
By the assumptions \eqref{eq:1} and \eqref{eq:2}, we have
$$ L_{u^*}(u) \leq G^*(u^*) + \ell(u) \leq G^*(u^*) + \ell_0(\| u \|_Y).$$
The same inequality holds for $-u$, so
$$ |L_{u^*}(u)| \leq G^*(u^*) + \leq \ell_0(\| u \|_Y).$$
Now, $L_{u^*}: X \cap Y \rightarrow \mathbb{R}$ is linear and $\ell$ is a convex function, so by the Hahn-Banach theorem (in the form presented e.g. in \cite{RS}) there exists a linear extension $L$ of $L_{u^*}$ to $Y$ such that $|L(u)| \le \ell(u).$ Hence, if $||u||_{Y}\le 1$, we have
$$|L(u)|\le G^*(A^* u^*) + \ell_0(1)$$
and consequently $L$ is continuous. Therefore, $L$ is represented by an element of~$Y^*$, hence $L_{u^*}$ is as well, and we conclude that $u^* \in Y^*$.
\end{proof}

We now proceed with the characterisation of the operator $\mathcal{A_N}$.

\begin{lemma}\label{lm:Neumann} Under the assumptions \ref{A1}-\ref{A2}, we have
\begin{equation}\label{e3.lm:Neumann}
\mathcal{A_N} \subset \partial \mathcal{F_N}.
\end{equation}
\end{lemma}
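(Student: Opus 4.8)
The plan is to verify the subgradient inequality directly. Fix $(u,v) \in \mathcal{A_N}$ and let $\z \in X_2(\Omega)$ be a vector field witnessing this, so that \eqref{e1Def}--\eqref{e4Def} hold; note that $u \in BV(\Omega) \cap L^2(\Omega)$ and $v \in L^2(\Omega)$. Since $\partial \mathcal{F_N}$ is taken in $L^2(\Omega)$, it suffices to show that $\mathcal{F_N}(w) \geq \mathcal{F_N}(u) + \int_\Omega v\,(w-u)\,dx$ for every $w \in L^2(\Omega)$. This is trivial when $w \notin BV(\Omega)$, so I would assume $w \in BV(\Omega) \cap L^2(\Omega)$, whence $w-u \in BV(\Omega) \cap L^2(\Omega)$. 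Using $v = -\divi(\z)$ from \eqref{e2Def}, Green's formula \eqref{Green} applied to $w-u$ and $\z$, and the boundary condition \eqref{e4Def}, the right-hand side becomes
$$\int_\Omega v\,(w-u)\,dx = -\int_\Omega (w-u)\,\divi(\z)\,dx = \int_\Omega \big(\z, D(w-u)\big) = \int_\Omega (\z, Dw) - \int_\Omega (\z, Du),$$
where the last step uses linearity of the Anzellotti pairing in the $BV$ argument.

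Next I would bound $\int_\Omega f(x,Dw)$ from below by splitting $Dw$ and $Du$ into their $\mathcal{L}^N$-absolutely continuous and singular parts. On the absolutely continuous part, \eqref{e1Def} gives $\z(x) \in \partial_\xi f(x,\nabla u(x))$ for a.e.\ $x$, so the defining subgradient inequality yields $f(x,\nabla w(x)) - f(x,\nabla u(x)) \geq \z(x)\cdot(\nabla w(x) - \nabla u(x))$ a.e., and integrating over $\Omega$ gives the corresponding estimate. For the singular part of $Du$, condition \eqref{e3Def} gives the exact identity $\int_\Omega f^0(x,\overrightarrow{D^s u})\,d|D^s u| = \int_\Omega \z\cdot D^s u$. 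For the singular part of $Dw$, I would invoke \eqref{good1} of Proposition \ref{prop:resultfrombs}, which after integration yields $\int_\Omega f^0(x,\overrightarrow{D^s w})\,d|D^s w| \geq \int_\Omega \z\cdot D^s w$. The one hypothesis that must be checked before applying the Proposition is that $f^*(\cdot,\z) < \infty$ holds $\mathcal{L}^N$-a.e.; this follows from \eqref{e1Def} via the Young equality $f^*(x,\z(x)) = \z(x)\cdot\nabla u(x) - f(x,\nabla u(x))$, which is finite a.e.

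Finally I would add the three estimates. Recalling the definition of the measure $f(x,Dw)$ as $\int_\Omega f(x,\nabla w)\,dx + \int_\Omega f^0(x,\overrightarrow{D^s w})\,d|D^s w|$ (and likewise for $u$), and the decomposition $(\z, D\psi) = (\z\cdot\nabla\psi)\,d\mathcal{L}^N + \z\cdot D^s\psi$, the sum telescopes to
$$\int_\Omega f(x,Dw) \geq \int_\Omega f(x,Du) + \int_\Omega (\z, Dw) - \int_\Omega (\z, Du),$$
which by the first paragraph equals $\mathcal{F_N}(u) + \int_\Omega v\,(w-u)\,dx$. As $w$ was arbitrary, $v \in \partial\mathcal{F_N}(u)$, giving $\mathcal{A_N} \subset \partial\mathcal{F_N}$. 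There is no genuine obstacle here: the proof is an assembly of Green's formula, the convexity of $f$ through the pointwise subgradient inequality, and Proposition \ref{prop:resultfrombs}; the only places demanding care are matching the absolutely continuous and singular parts of $Dw$ and $Du$ correctly and checking the finiteness of $f^*(\cdot,\z)$ needed to apply the Proposition. (The reverse inclusion $\partial\mathcal{F_N}\subset\mathcal{A_N}$, i.e.\ maximality of $\mathcal{A_N}$, is the substantial direction, and is presumably where the Fenchel--Rockafellar duality recalled above enters.)
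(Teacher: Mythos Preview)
Your proof is correct and follows essentially the same route as the paper: Green's formula together with \eqref{e4Def} to rewrite $\int_\Omega v(w-u)\,dx$ as a difference of Anzellotti pairings, the pointwise subgradient inequality from \eqref{e1Def} on the absolutely continuous parts, condition \eqref{e3Def} for $D^s u$, and \eqref{good1} from Proposition~\ref{prop:resultfrombs} for $D^s w$. You are in fact slightly more explicit than the paper in checking that $f^*(\cdot,\z)<\infty$ a.e.\ (via the Young equality) before invoking the Proposition.
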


\begin{proof}  Let $(u,v) \in \mathcal{A_N}$ and $\z \in X_2(\Omega)$ satisfying \eqref{e1Def}-\eqref{e4Def}. Given $w \in L^2(\Omega) \cap BV(\Omega)$, by \eqref{e1Def} we have
\begin{equation}\label{e1.lm:Neumann}
f(x,\nabla w(x)) - f(x,\nabla u(x)) \geq \z(x) \cdot (\nabla w(x) - \nabla u(x)) \quad \quad  \mathcal{L}^N-\hbox{a.e. in } \Omega.
\end{equation}
Then, since $\z \in X_2(\Omega)$ satisfies \eqref{e1Def}-\eqref{e4Def}, applying Green formula \eqref{Green} and having in mind properties \eqref{good1} and \eqref{e1.lm:Neumann}, we have
\begin{align}
\int_\Omega (w - u) \, v \, dx &= - \int_\Omega \mathrm{div}(\z) (w - u) \, dx = \int_\Omega (\z, Dw) -  \int_\Omega (\z, Du) \\
&= \int_\Omega \z \cdot \nabla w \, dx + \int_\Omega \z \cdot D^s w - \int_\Omega \z \cdot \nabla u \, dx - \int_\Omega \z \cdot D^s u \\
&\leq \int_\Omega f(x, \nabla w) \, dx - \int_\Omega f(x, \nabla u) \, dx + \int_\Omega \z \cdot D^s w - \int_\Omega f^0(x, D^s u)
\end{align}
\begin{align}
&\leq \int_\Omega f(x, \nabla w) \, dx - \int_\Omega f(x, \nabla u) \, dx + \int_\Omega f^0(x, D^s w) - \int_\Omega f^0(x, D^s u) \\
&= \int_\Omega f(x,Dw)  -  \int_\Omega f(x,Du),
\end{align}
which concludes the proof.
\end{proof}

\begin{theorem}\label{thm:neumann}
Under the assumptions \ref{A1}-\ref{A2}, we have
$$\partial \mathcal{F_N} = \mathcal{A_N},$$
and $D(\mathcal{A_N})$ is dense in $L^2(\Omega)$.
\end{theorem}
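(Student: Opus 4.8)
Since Lemma~\ref{lm:Neumann} already gives $\mathcal{A_N}\subseteq\partial\mathcal{F_N}$, the plan is to obtain the reverse inclusion from a \emph{range condition} and then deduce the density for free. The functional $\mathcal{F_N}$ is proper, convex and lower semicontinuous on $L^2(\Omega)$, so $\partial\mathcal{F_N}$ is maximal monotone and $I+\partial\mathcal{F_N}$ is a bijection of $L^2(\Omega)$ with single-valued inverse (the resolvent). Hence, once $R(I+\mathcal{A_N})=L^2(\Omega)$ is known, for any $(u,v)\in\partial\mathcal{F_N}$ one sets $g:=u+v$, picks $(\tilde u,\tilde v)\in\mathcal{A_N}\subseteq\partial\mathcal{F_N}$ with $\tilde u+\tilde v=g$, and concludes $(\tilde u,\tilde v)=(u,v)$ by injectivity of $I+\partial\mathcal{F_N}$; this yields $\partial\mathcal{F_N}=\mathcal{A_N}$. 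The density of $D(\mathcal{A_N})=D(\partial\mathcal{F_N})$ in $L^2(\Omega)$ then follows from the classical fact (see \cite{Brezis}) that $D(\partial\mathcal{F_N})$ is dense in $\overline{D(\mathcal{F_N})}^{\,L^2(\Omega)}=\overline{BV(\Omega)\cap L^2(\Omega)}^{\,L^2(\Omega)}=L^2(\Omega)$.

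To prove the range condition, fix $g\in L^2(\Omega)$ and let $u:=(I+\partial\mathcal{F_N})^{-1}g$, equivalently the unique minimiser over $L^2(\Omega)$ of $J(w):=\mathcal{F_N}(w)+\frac12\|w-g\|_{L^2(\Omega)}^2$; then $u\in BV(\Omega)\cap L^2(\Omega)$ and $(u,g-u)\in\partial\mathcal{F_N}$, and the goal is to produce $\z\in X_2(\Omega)$ witnessing $(u,g-u)\in\mathcal{A_N}$, with $v:=g-u=-\mathrm{div}(\z)$. I would run the Fenchel--Rockafellar Theorem~\ref{FRTh} on the minimisation of $J$, written over $U:=W^{1,1}(\Omega)\cap L^2(\Omega)$ in the form $E(Aw)+G(w)$, where $A=\nabla$ maps $U$ into $V:=L^1(\Omega;\R^N)$, $E(q):=\int_\Omega f(x,q(x))\,dx$, and $G(w):=\frac12\|w-g\|_{L^2(\Omega)}^2$. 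By \ref{A1} the integral functional $E$ is convex, finite, and bounded above on bounded subsets of $L^1(\Omega;\R^N)$, hence continuous there, and $G$ is continuous on $U$; so the qualification hypothesis of Theorem~\ref{FRTh} holds with $u_0=0$. Moreover $\inf\eqref{eq:primal}=\min J=J(u)$: indeed $\mathcal{F_N}$ coincides with $w\mapsto\int_\Omega f(x,\nabla w)\,dx$ on $W^{1,1}(\Omega)$, and a matching upper bound comes from a mollified sequence together with Theorem~\ref{thm:reshetnyak} (realised in the next paragraph). On the dual side $V^*=L^\infty(\Omega;\R^N)$, one computes $E^*(\z^*)=\int_\Omega f^*(x,\z^*(x))\,dx$, whereas $G^*(A^*\z^*)$ is finite exactly when $\z^*\in X_2(\Omega)$ with $[\z^*,\nu_\Omega]=0$ --- testing $w\mapsto\int_\Omega\z^*\cdot\nabla w\,dx$ against $C^\infty_c(\Omega)$ forces $\mathrm{div}(\z^*)\in L^2(\Omega)$, and against all $w\in W^{1,1}(\Omega)\cap L^2(\Omega)$, via Green's formula \eqref{Green}, forces the normal trace to vanish --- in which case $G^*(A^*\z^*)=\frac12\|\mathrm{div}(\z^*)\|_{L^2(\Omega)}^2-\int_\Omega g\,\mathrm{div}(\z^*)\,dx$. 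Theorem~\ref{FRTh} then produces a maximiser $p^*$ of \eqref{eq:dual}; put $\z:=-p^*$. This already gives \eqref{e4Def}, and finiteness of $\int_\Omega f^*(x,\z)\,dx$ gives $f^*(\cdot,\z)<\infty$ $\mathcal{L}^N$-a.e., so Proposition~\ref{prop:resultfrombs} is available.

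Since the primal need not be attained in $U$, I would read off the remaining conditions from the $\varepsilon$-subdifferentiability relations \eqref{eq:epsilonsubdiff11N}--\eqref{eq:epsilonsubdiff21N} along a minimising sequence. Fix $w_n\in W^{1,1}(\Omega)\cap L^2(\Omega)$ converging to $u$ in $L^2(\Omega)$ and area-strictly in $BV(\Omega)$ (e.g.\ suitable mollifications, using the strict density of smooth functions in $BV(\Omega)$); by Theorem~\ref{thm:reshetnyak} applied to $f$ one has $\int_\Omega f(x,\nabla w_n)\,dx\to\int_\Omega f(x,Du)$, so $w_n$ is minimising for \eqref{eq:primal}. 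Using $\int_\Omega\z\cdot\nabla w_n\,dx=\int_\Omega(\z,Dw_n)=-\int_\Omega w_n\,\mathrm{div}(\z)\,dx\to-\int_\Omega u\,\mathrm{div}(\z)\,dx=\int_\Omega(\z,Du)$ (Green's formula \eqref{Green} with vanishing normal trace, first for the Sobolev $w_n$ and then for $u\in BV(\Omega)$) together with $w_n\to u$ in $L^2(\Omega)$, I would pass to the limit. Relation \eqref{eq:epsilonsubdiff21N} collapses to $\frac12\|(u-g)-\mathrm{div}(\z)\|_{L^2(\Omega)}^2=0$, i.e.\ $-\mathrm{div}(\z)=g-u=v$, which is \eqref{e2Def}. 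Relation \eqref{eq:epsilonsubdiff11N}, after splitting $\int_\Omega(\z,Du)$ and $\int_\Omega f(x,Du)$ into their absolutely continuous and singular parts, collapses to
\[
\int_\Omega\big[f(x,\nabla u)+f^*(x,\z)-\z\cdot\nabla u\big]\,dx+\bigg(\int_\Omega f^0\big(x,\overrightarrow{D^s u}\big)\,d|D^s u|-\int_\Omega\z\cdot D^s u\bigg)=0 ;
\]
here the first integrand is $\mathcal{L}^N$-a.e.\ nonnegative by the Fenchel--Young inequality (valid since $f^*(\cdot,\z)<\infty$ a.e.), and the bracketed term is a nonnegative measure by \eqref{good1}, so both vanish. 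This gives $\z\in\partial_\xi f(x,\nabla u)$ $\mathcal{L}^N$-a.e., which is \eqref{e1Def}, and $\z\cdot D^s u=f^0(x,\overrightarrow{D^s u})\,|D^s u|=f^0(x,D^s u)$ as measures, which is \eqref{e3Def}. Hence $(u,v)\in\mathcal{A_N}$ and $g\in R(I+\mathcal{A_N})$, completing the proof.

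The abstract wrapper --- maximal monotonicity, the resolvent, and the density of the domain --- is routine, and the heart of the argument is the duality step of the two middle paragraphs. I expect two points to require care. First, the spaces $U,V$ and the operator $A=\nabla$ must be chosen so that finiteness of $G^*(A^*\z^*)$ is \emph{equivalent} to $\z^*$ lying in $X_2(\Omega)$ with vanishing normal trace; this is precisely what makes the Neumann boundary condition \eqref{e4Def} fall out automatically, and extracting the normal trace from Green's formula is the delicate part. Second, Fenchel--Rockafellar naturally controls only the absolutely continuous part of $Du$, so the passage to the singular measure $D^s u$ --- i.e.\ the derivation of \eqref{e3Def} --- relies entirely on Proposition~\ref{prop:resultfrombs} through \eqref{good1}; moreover, passing to the limit in \eqref{eq:epsilonsubdiff11N} forces the approximating sequence to converge area-strictly in $BV(\Omega)$, so that Reshetnyak's Theorem~\ref{thm:reshetnyak} can be applied to transfer the energy $\int_\Omega f(x,\nabla w_n)\,dx$ to $\int_\Omega f(x,Du)$.
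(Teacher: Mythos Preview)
Your proof is correct and follows the same overall architecture as the paper: the inclusion $\mathcal{A_N}\subset\partial\mathcal{F_N}$ from Lemma~\ref{lm:Neumann}, the range condition via Fenchel--Rockafellar duality, the $\varepsilon$-subdifferentiability relations along an (area-)strictly convergent approximating sequence, Reshetnyak's Theorem~\ref{thm:reshetnyak} to pass to the limit, and Proposition~\ref{prop:resultfrombs} to split the resulting equality into its absolutely continuous and singular parts.

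The one genuine difference lies in how the Neumann condition \eqref{e4Def} is encoded in the duality. You take $V=L^1(\Omega;\R^N)$ and $A=\nabla$, and extract $[\z,\nu_\Omega]=0$ from the requirement that $G^*(A^*\z^*)<\infty$, i.e.\ that $w\mapsto\int_\Omega\z^*\cdot\nabla w$ be represented by an $L^2$ function on all of $W^{1,1}(\Omega)\cap L^2(\Omega)$. The paper instead takes $V=L^1(\partial\Omega)\times L^1(\Omega;\R^N)$ and $A=(\mathrm{tr},\nabla)$ with $E_0\equiv0$; then $E_0^*$ forces $p_0^*=0$, and a separate computation shows $p_0^*=-[\overline{p}^*,\nu_\Omega]$, yielding the same boundary condition. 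Your route is slightly leaner for the Neumann problem on its own; the paper's extra boundary factor is what makes their framework reusable for the Dirichlet problem in Section~\ref{sec:dirichlet}, where only $E_0$ needs to be changed. Both mechanisms are valid, and the rest of the argument (the limit passage and the use of \eqref{good1}) is identical.
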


\begin{proof}
{\bf  Step 1.} By Lemma \ref{lm:Neumann}, we have that the operator $\mathcal{A_N}$ is monotone and contained in $\partial \mathcal{F}_{\mathcal{N}}$. The operator $\partial \mathcal{F_N}$ is maximal monotone. Then, if $\mathcal{A_N}$ satisfies the range condition, by Minty Theorem we would also have that the operator $\mathcal{A_N}$ is maximal monotone, and consequently $\partial \mathcal{F_N}= \mathcal{A_N}$. In order to finish the proof, we need to show that $\mathcal{A_N}$ satisfies the range condition, i.e.
\begin{equation}\label{RCondN}
\hbox{Given} \ g \in  L^2(\Omega), \ \exists \, u \in D(\mathcal{A_N}) \ s.t. \ \  u + \mathcal{A_N}(u) \ni g.
\end{equation}
We can rewrite the above as
$$u + \mathcal{A_N}(u) \ni g \iff (u, g-u) \in \mathcal{A_N},$$
so we need to show that there exists a bounded vector field $\z \in X_2(\Omega)$  such that the following conditions hold:
$$ \z \in \partial_\xi f(x,\xi) \quad \mathcal{L}^N-\hbox{a.e. in } \Omega; $$
$$  -\mathrm{div}(\z) = g-u \quad \hbox{in} \ \Omega; $$
$$ \z \cdot D^s u = f^0(x,D^s u) \quad \hbox{as measures};$$
$$ [\z, \nu_\Omega] =0 \qquad  \mathcal{H}^{N-1}-\mbox{a.e.  on} \ \partial \Omega.$$
We are going to prove \eqref{RCondN}  by means of the Fenchel-Rockafellar Duality Theorem. We set
$$U = W^{1,1}(\Omega) \cap L^2(\Omega), \quad V = L^1(\partial\Omega,\mathcal{H}^{N-1}) \times L^1(\Omega;\mathbb{R}^N),$$
and the operator $A: U \rightarrow V$ is defined by the formula
$$ Au = (u|_{\partial\Omega}, \nabla u).$$
Clearly, $A$ is a linear and continuous operator. Moreover, the dual spaces to $U$ and $V$ are
$$
U^* = (W^{1,1}(\Omega) \cap L^2(\Omega))^*, \qquad V^* =  L^\infty(\partial\Omega,\mathcal{H}^{N-1}) \times L^\infty(\Omega;\mathbb{R}^N).
$$
We denote the points $p \in V$ in the following way: $p = (p_0, \overline{p})$, where $p_0 \in L^1(\partial\Omega,\mathcal{H}^{N-1})$ and $\overline{p} \in L^1(\Omega;\mathbb{R}^N)$. We will also use a similar notation for points $p^* \in V^*$. Then, we set $E: L^1(\partial\Omega, \mathcal{H}^{N-1}) \times L^1(\Omega;\mathbb{R}^N) \rightarrow \mathbb{R}$ by the formula
\begin{equation}\label{Ieq:definitionofEN}
E(p_0, \overline{p}) = E_0(p_0) + E_1(\overline{p}), \quad E_0(p_0) = 0, \quad E_1(\overline{p}) = \int_\Omega f(x,\overline{p}) \, dx.
\end{equation}
We also define $G:W^{1,1}(\Omega) \cap L^2(\Omega) \rightarrow \mathbb{R}$ as
$$G(u):= \frac12 \int_\Omega u^2 \, dx - \int_\Omega ug \, dx.$$

{\bf \flushleft Step 2.}  We now consider the convex conjugates $E^*$ and $G^*$. Notice that $G^*$ only enters the calculation via $A^* p^*$. First, observe that whenever $u^* \in U^*$ is such that $G^*(u^*) < \infty$, it holds that $u^* \in L^2(\Omega)$; to this end, we apply Lemma~\ref{ap.a.lem.dual} to the spaces $X = W^{1,1}(\Omega)$ and $Y = L^2(\Omega)$, with
$$\ell(u) := G(u) = \frac12 \int_{\Omega} u^2 \, dx - \int_{\Omega} ug \, dx$$
and the upper bound $\ell_0$ given by
$$ \ell_0(t) = \| g \|_{L^2(\Omega)} t + \frac{1}{2} t^2,$$
which yields the claim.

 Now, take any $p^* = (p^*_0,\overline{p}^*) \in L^\infty(\partial\Omega,\mathcal{H}^{N-1}) \times L^\infty(\Omega;\mathbb{R}^N)$ in the domain of $A^*$ with $G^*(A^*p^*) < \infty$. By the previous paragraph, it holds that $A^* p^* \in L^2(\Omega)$. Since the dual of the gradient is minus divergence, we get
\begin{equation}\label{eq:neumanndiv0}
A^* p^* =- \mathrm{div}(\overline{p}^*).
\end{equation}
In particular, $\overline{p}^* \in X_2(\Omega)$. Therefore,  if $G^*(A^*p^*) < \infty$, for any $u \in W^{1,1}(\Omega) \cap L^2(\Omega)$ we may apply the Green formula (Theorem \ref{Green}) and get
\begin{align}
\int_\Omega u \, (A^* p^*) \, dx &=  \langle u, A^* p^* \rangle_{U,U^*}  = \langle Au, p^* \rangle_{V,V^*}  = \int_{\partial\Omega} p^*_0 \, u \, d\mathcal{H}^{N-1} + \int_\Omega \overline{p}^* \cdot \nabla u \, dx \\
& = \int_{\partial\Omega} p^*_0 \, u \, d\mathcal{H}^{N-1} - \int_\Omega u \, \mathrm{div}(\overline{p}^*) \, dx + \int_{\partial\Omega} u \, [\overline{p}^*, \nu_\Omega] \, d\mathcal{H}^{N-1} \\
&= - \int_\Omega u \, \mathrm{div}(\overline{p}^*) \, dx + \int_{\partial\Omega} u \, (p_0^*  + [\overline{p}^*, \nu_\Omega]) \, d\mathcal{H}^{N-1}.
\end{align}
By \eqref{eq:neumanndiv0}, the integrals over $\Omega$ cancel out,  so
$$ \int_{\partial\Omega} u \, (p_0^* + [\overline{p}^*, \nu_\Omega]) \, d\mathcal{H}^{N-1} = 0$$
for all $u \in W^{1,1}(\Omega) \cap L^2(\Omega)$. Since the trace operator on $W^{1,1}(\Omega)$ is onto $L^1(\partial\Omega, \mathcal{H}^{N-1})$, by considering truncations we see that after a restriction to $W^{1,1}(\Omega) \cap L^\infty(\Omega)$ its image is $L^\infty(\partial\Omega,\mathcal{H}^{N-1})$. Therefore, after a restriction to $W^{1,1}(\Omega) \cap L^2(\Omega)$ its image is dense in $L^1(\partial\Omega,\mathcal{H}^{N-1})$, because it contains $L^\infty(\partial\Omega,\mathcal{H}^{N-1})$. In particular, we have
$$ \int_{\partial\Omega} w \, (p_0^*  + [\overline{p}^*, \nu_\Omega]) \, d\mathcal{H}^{N-1} = 0$$
for $w$ in a dense subset of $L^1(\partial\Omega,\mathcal{H}^{N-1})$. Hence,
\begin{equation}\label{eq:pandpoagree}
p_0^* = - [\overline{p}^*, \nu_\Omega] \qquad \mathcal{H}^{N-1}-\mbox{a.e. on } \partial\Omega.
\end{equation}
 We now turn to the functional $E^*$. Since the variables are separated, we have that $E^* = E_0^* + E_1^*$, and it is clear that the functional $E_0^*: L^\infty(\partial\Omega,\mathcal{H}^{N-1}) \rightarrow \mathbb{R} \cup \{ \infty \}$ is given by the formula
$$E_0^*(p_0^*) = \left\{ \begin{array}{ll} 0 \quad &\hbox{if} \ \ p_0^* = 0; \\[10pt] +\infty \quad & \hbox{if} \ \ p_0^* \not=  0.  \end{array}  \right. $$
The functional $E_1^*: L^\infty(\Omega; \mathbb{R}^N) \rightarrow [0,\infty]$ is given by the formula (see \cite[Proposition IV.1.2]{EkelandTemam})
$$E_1^*(\overline{p}^*) = \int_\Omega f^*(x,\overline{p}^*) \, dx. $$
Here, it is possible that $f^*$ takes the value $+\infty$ (in fact, for $1$-homogeneous functionals, it only takes the values $0$ and $+\infty$).

{\flushleft \bf Step 3.} Consider the energy functional $\mathcal{G_N} : L^2(\Omega) \rightarrow (-\infty, + \infty]$ defined by
\begin{equation}\label{11fuctmeN}
\mathcal{G_N}(v):= \left\{ \begin{array}{ll} \mathcal{F_N}(v) + G(v)  \quad &\hbox{if} \ v \in BV(\Omega) \cap L^2(\Omega); \\ \\ + \infty \quad &\hbox{if} \ v \in  L^2(\Omega) \setminus BV(\Omega).\end{array}\right.
\end{equation}
This functional is the extension of the functional  $E \circ A + G$, which is well-defined for functions in $W^{1,1}(\Omega) \cap L^2(\Omega)$, to the space $BV(\Omega) \cap L^2(\Omega)$. Since $\mathcal{G_N}$ is coercive, convex and lower semicontinuous, the primal minimisation problem
\begin{equation}\label{eq:primalproblemneumann}
\min_{v \in L^2(\Omega)} \mathcal{G}_{\mathcal{N}}(u) = \min_{v \in BV(\Omega) \cap L^2(\Omega)} \bigg\{ E(Av) + G(v) \bigg\}
\end{equation}
admits a solution $u$. Also, for $u_0 \equiv 0$ we have $E(Au_0) = 0 < \infty$, $G(u_0) = 0 < \infty$ and $E$ is continuous at $0$. Then, by the Fenchel-Rockafellar Duality Theorem, we have
\begin{equation}\label{DFR1-TVflowN}\inf \eqref{eq:primal} = \sup \eqref{eq:dual}
\end{equation}
and
\begin{equation}\label{DFR2-TVflowN} \hbox{the dual problem \eqref{eq:dual} admits at least one solution,}
\end{equation}
where the dual problem is
\begin{equation}\label{dual1N}
\sup_{p^* \in L^\infty(\partial\Omega, \mathcal{H}^{N-1}) \times L^\infty(\Omega; \mathbb{R}^N)} \bigg\{  - E_0^*(-p_0^*) - E_1^*(-\overline{p}^*) - G^*(A^* p^*) \bigg\}.
\end{equation}
Keeping in mind the above calculations, we set $\mathcal{Z}$ to be the subset of $V^*$ such that the dual problem does not immediately return $-\infty$, namely
$$
\mathcal{Z} = \bigg\{ p^* \in L^\infty(\partial\Omega, \mathcal{H}^{N-1}) \times L^\infty(\Omega; \mathbb{R}^N): \quad \mathrm{div}(\overline{p}^*) \in L^2(\Omega), \quad p_0^* =0 \bigg\}.
$$
Hence, we may rewrite the dual problem as
\begin{equation}\label{dual3N}
\sup_{p^* \in \mathcal{Z}} \bigg\{ - E_1^*(-\overline{p}^*) - G^*(A^* p^*)\bigg\}.
\end{equation}
Now, let us take a sequence $u_n \in W^{1,1}(\Omega)$ which has the same trace as $u$ and converges strictly to $u$,  a minimiser of $\mathcal{G_N}$, and also $u_n \to u$ in $L^2(\Omega)$; then, it is a minimising sequence in \eqref{eq:primal}.  Since by \eqref{DFR1-TVflowN} and \eqref{DFR2-TVflowN} there is no duality gap and there exists a solution to the dual problem, we can now apply to this sequence the $\varepsilon$-subdifferentiability property given in \eqref{eq:epsilonsubdiff11N} and \eqref{eq:epsilonsubdiff21N}.  Let $p^*$ be a solution to the dual problem. By equation \eqref{eq:epsilonsubdiff21N}, for every $w \in L^2(\Omega)$ we have
$$G(w) - G(u_n) \geq \langle (w -u_n), A^* p^* \rangle_{ U,U^*}  -\varepsilon_n.$$
Hence, since $A^*p^* \in L^2(\Omega)$,
$$G(w) - G(u) \geq \langle (w -u), A^* p^* \rangle_{ U,U^*}  = \langle (w -u), A^* p^* \rangle_{L^2(\Omega)},$$
and consequently,
$$ A^* p^* \in  \partial_{L^2(\Omega)} G(u) = \{ u - g \}. $$
Therefore, by \eqref{eq:neumanndiv0} we get
\begin{equation}\label{div1N}
 -\mathrm{div}(\overline{p}^*) = u -g.
\end{equation}
Also, by the definition of $\mathcal{Z}$ and keeping in mind that $p_0^* = - [\overline{p}^*, \nu_\Omega]$, we get
\begin{equation}\label{front1N}
[-\overline{p}^*, \nu_\Omega] =0 \quad \mathcal{H}^{N-1}-\mbox{a.e. on} \ \partial \Omega.
\end{equation}
Therefore, since the boundary term disappears, equation \eqref{eq:epsilonsubdiff11N} gives
\begin{equation}\label{eq:goodN}
0 \leq \int_\Omega f(x,\nabla u_n)  \, dx + \int_\Omega f^*(x,\overline{p}^*) \, dx - \langle \nabla u_n, -\overline{p}^* \rangle_{L^1(\Omega;\mathbb{R}^N),L^\infty(\Omega;\mathbb{R}^N)}  \leq \varepsilon_n,
\end{equation}
which we rewrite as
\begin{equation}\label{eq:neumannbeforelimit}
0 \leq \int_\Omega f(x,\nabla u_n)  \, dx + \int_\Omega f^*(x,\overline{p}^*)  \, dx \leq \int_\Omega \nabla u_n \cdot (-\overline{p}^*) \, dx + \varepsilon_n.
\end{equation}
Keeping in mind that $-\mathrm{div}(\overline{p}^*) = u-g$ and again using the fact that the trace of $u_n$ is fixed and equal to the trace of $u$, by Green's formula we get
$$ \int_\Omega  \nabla u_n \cdot \overline{p}^* \, dx  = - \int_\Omega u_n \,  \mathrm{div}(\overline{p}^*)\, dx   = \int_\Omega u_n \, (u -g)  \, dx. $$
Then, applying again Green's formula, we have
$$\lim_{n \to \infty}  \int_\Omega \nabla u_n \cdot \overline{p}^* \, dx = \int_\Omega u \, (u-g) \, dx = -  \int_\Omega u \,  \mathrm{div}(\overline{p}^*)  \, dx  = \int_\Omega (\overline{p}^*, Du). $$
But then, passing to the limit in equation \eqref{eq:neumannbeforelimit}, by Reschetnyak theorem (Theorem \ref{thm:reshetnyak}) we get that
\begin{equation}
\int_\Omega f(\cdot, \nabla u) \, dx + \int_{\overline{\Omega}} f^0\left(\cdot, \frac{d D^s u}{d|D^s u|}\right) d|D^s u| + \int_\Omega f^*(x,\overline{p}^*) \, dx = \int_\Omega -\overline{p}^* \cdot \nabla u \, dx + \int_\Omega (-\overline{p}^*, Du)^s.
\end{equation}
From this, the required results on the absolutely continuous part and the singular part follow. Indeed, by the definition of the dual function
\begin{equation}
\int_\Omega f(\cdot, \nabla u) \, dx + \int_\Omega f^*(x,\overline{p}^*) \, dx \geq \int_\Omega -\overline{p}^* \cdot \nabla u \, dx,
\end{equation}
and by Proposition \ref{prop:resultfrombs} we get that \begin{equation}
\int_{\overline{\Omega}} f^0 \left(\cdot, \frac{d D^s u}{d|D^s u|}\right) d|D^s u| \geq \int_\Omega (-\overline{p}^*, Du)^s,
\end{equation}
so both of these inequalities have to be equalities.  But, the first one means exactly that $-\overline{p}^* \in \partial_\xi f(x,\nabla u)$, so \eqref{e1Def} holds for the choice $\mathbf{z} = -\overline{p}^*$, and the second one means that $-\overline{p}^* \cdot D^s u = f^0(x,D^s u)$, hence \eqref{e3Def} holds. We already have the divergence constraint \eqref{e2Def} by \eqref{div1N} and the boundary constraint \eqref{e4Def} is incorporated in the definition of the dual problem.

Finally, by \cite[Proposition 2.11]{Brezis}, we have
$$ D(\partial \mathcal{F_N}) \subset  D(\mathcal{F_N}) =  BV(\Omega) \cap L^2(\Omega) \subset \overline{D(\mathcal{F_N})}^{L^2(\Omega)} \subset \overline{D(\partial \mathcal{F_N})}^{L^2(\Omega)},$$
from which follows the density of the domain.
\end{proof}

Our concept of solution of the Neumann problem \eqref{NeumannProb} is the following:
\begin{definition}\label{def:neumann1p}
{\rm  Given $u_0 \in L^2(\Omega)$, we say that $u$ is a {\it weak solution} of the  Neumann problem \eqref{NeumannProb} in $[0,T]$, if $u \in  C([0,T]; L^2(\Omega)) \cap W_{loc}^{1,2}(0, T; L^2(\Omega))$,   $u(0, \cdot) =u_0$, and for almost all $t \in (0,T)$
\begin{equation}\label{def:neumannflow}
u_t(t, \cdot) + \mathcal{A_N}u(t, \cdot) \ni 0.
\end{equation}
In other words, if $u(t) \in BV(\Omega)$ and there exist vector fields $\z(t) \in X_2(\Omega)$ such that the following conditions hold:
\begin{equation}\label{e1DefP}\z(t) \in \partial_\xi f(x,\nabla u(t)) \quad \mathcal{L}^N-\hbox{a.e. in } \Omega;\end{equation}
\begin{equation}
u_t(t) = \mathrm{div}(\z(t))\quad  \hbox{in} \ \mathcal{D}^\prime(\Omega);\end{equation}
\begin{equation}
\z(t) \cdot D^s u(t) = f^0(x,D^s u(t)) = f^0(x,\overrightarrow{D^s u(t)}) \vert
D^s u(t) \vert \quad \hbox{as measures};\end{equation}
\begin{equation}\label{e4DefP}[\z(t), \nu_\Omega] =0 \qquad  \mathcal{H}^{N-1}-\mbox{a.e. on } \partial \Omega.
\end{equation}

}
\end{definition}

Then, using the classical theory of maximal monotone operators (see for instance \cite{Brezis}), as a consequence of Theorem \ref{thm:neumann} we have the following existence and uniqueness theorem.

\begin{theorem}
Under the assumptions \ref{A1}-\ref{A2}, for any $u_0 \in L^2(\Omega)$ and all $T > 0$ there exists a unique weak solution of the Neumann problem \eqref{NeumannProb} in $[0,T]$.
\end{theorem}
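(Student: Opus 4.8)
The plan is to read off the result from Theorem~\ref{thm:neumann} together with the classical Brezis--Komura theory of gradient flows of convex functionals on a Hilbert space. The functional $\mathcal{F_N}$ is proper, convex and lower semicontinuous on $L^2(\Omega)$, so $\partial\mathcal{F_N}$ is a maximal monotone operator, and the abstract theory (see \cite{Brezis}) provides, for every $u_0$ in the $L^2(\Omega)$-closure of $D(\partial\mathcal{F_N})$, a unique function $u \in C([0,T];L^2(\Omega))$ with $u \in W^{1,2}_{loc}(0,T;L^2(\Omega))$, $u(0,\cdot)=u_0$, such that $u(t,\cdot) \in D(\partial\mathcal{F_N})$ and $u_t(t,\cdot) \in -\partial\mathcal{F_N}(u(t,\cdot))$ for a.e. $t \in (0,T)$; moreover this solution is global (no finite-time blow-up, since $\mathcal{F_N} \geq 0$), so it exists on $[0,T]$ for every $T>0$. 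This gives both the regularity and the evolution inclusion required in Definition~\ref{def:neumann1p}.

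Next I would feed in the two conclusions of Theorem~\ref{thm:neumann}. First, $\overline{D(\mathcal{A_N})}^{L^2(\Omega)} = L^2(\Omega)$, which removes any restriction on the initial datum: for an arbitrary $u_0 \in L^2(\Omega)$ the Cauchy problem above has the strong solution $u$ just described. Second, $\partial\mathcal{F_N} = \mathcal{A_N}$, so the abstract inclusion becomes $u_t(t,\cdot) + \mathcal{A_N}u(t,\cdot) \ni 0$ for a.e. $t$; unwinding the definition of the operator $\mathcal{A_N}$, this says precisely that for a.e. $t$ the function $u(t,\cdot)$ lies in $BV(\Omega)$ and there is a vector field $\z(t) \in X_2(\Omega)$ satisfying conditions \eqref{e1DefP}--\eqref{e4DefP}. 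Together with the function class $C([0,T];L^2(\Omega)) \cap W^{1,2}_{loc}(0,T;L^2(\Omega))$ and the initial condition, this is exactly the notion of weak solution in Definition~\ref{def:neumann1p}, so existence is established.

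For uniqueness I would use the contraction property of the flow generated by a maximal monotone operator: if $u_1,u_2$ are two weak solutions with initial data $u_{0,1},u_{0,2}$, then by $\partial\mathcal{F_N}=\mathcal{A_N}$ both are strong solutions of $u'+\partial\mathcal{F_N}(u)\ni 0$, and monotonicity of $\partial\mathcal{F_N}$ gives $\frac{d}{dt}\|u_1(t,\cdot)-u_2(t,\cdot)\|_{L^2(\Omega)}^2 \leq 0$ for a.e. $t$, hence $\|u_1(t,\cdot)-u_2(t,\cdot)\|_{L^2(\Omega)} \leq \|u_{0,1}-u_{0,2}\|_{L^2(\Omega)}$; taking $u_{0,1}=u_{0,2}$ forces $u_1=u_2$.

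There is essentially no analytic obstacle here, since all the difficulty has been absorbed into the identification $\partial\mathcal{F_N}=\mathcal{A_N}$ in Theorem~\ref{thm:neumann}. The only point that needs (minor) care is the verification that "weak solution" in the sense of Definition~\ref{def:neumann1p} is the same object as a strong solution of the abstract evolution equation: one must check that the regularity classes coincide and that the pointwise conditions \eqref{e1DefP}--\eqref{e4DefP}, which define membership of $(u(t,\cdot),-u_t(t,\cdot))$ in $\mathcal{A_N}$, are an exact reformulation of the inclusion $u_t+\mathcal{A_N}u \ni 0$ — but this is immediate from the definition of $\mathcal{A_N}$.
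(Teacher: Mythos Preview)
Your proposal is correct and follows exactly the approach of the paper, which simply states that the result is a consequence of Theorem~\ref{thm:neumann} together with the classical theory of maximal monotone operators from \cite{Brezis}. You have merely spelled out in more detail what the paper leaves implicit.
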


Let us now briefly discuss how the general Definition \ref{def:neumann1p} looks like when specified to the most important special cases.

\begin{example}\label{ex:tvflow}
Take $f(x,\xi) = |\xi|$, which is the Lagrangian corresponding to the total variation flow. Then, the condition that the vector field $\z$ lies in the respective subdifferential a.e. implies that $\| \z \|_\infty \leq 1$. Therefore, given $u_0 \in L^2(\Omega)$, a function $u$ is a {\it weak solution} to the Neumann problem for the total variation flow in $[0,T]$, if $u \in  C([0,T]; L^2(\Omega)) \cap W_{loc}^{1,2}(0, T; L^2(\Omega))$,  $u(0, \cdot) =u_0$, and for almost all $t \in (0,T)$ we have $u(t) \in BV(\Omega)$ and there exist vector fields $\z(t) \in X_2(\Omega)$ with $\| \z(t) \|_\infty \leq 1$ such that the following conditions hold:
\begin{equation}
u_t(t) = \mathrm{div}(\z(t))\quad  \hbox{in} \ \mathcal{D}^\prime(\Omega);
\end{equation}
\begin{equation}
(\z(t), Du(t)) = |Du(t)| \quad \hbox{as measures};
\end{equation}
\begin{equation}
[\z(t), \nu_\Omega] =0 \qquad  \mathcal{H}^{N-1}-\mbox{a.e. on } \partial \Omega.
\end{equation}
For the anisotropic total variation flow, i.e. when $f(x,\xi)$ is $1$-homogeneous with respect to the second variable and comparable to the Euclidean norm, the condition that $\| \z(t) \|_\infty \leq 1$ is replaced by an estimate on the polar norm of $\z(t)$ and the total variation is computed with respect to an anisotropy.
\end{example}

\begin{example}\label{ex:c1functional}
Suppose that $f$ is a convex differentiable function of $\xi$ with continuous gradient for each fixed $x \in \Omega$. Then, since the subdifferential of $f$ with respect to $\xi$ contains only a single element, we denote
\begin{equation}
\a(x,\xi) := \nabla_\xi f(x, \xi)
\end{equation}
and the definition of solutions reduces to the following one: given $u_0 \in L^2(\Omega)$, a function $u$ is a {\it weak solution} to the Neumann problem for the gradient flow of $F$ in $[0,T]$, if $u \in  C([0,T]; L^2(\Omega)) \cap W_{loc}^{1,2}(0, T; L^2(\Omega))$,  $u(0, \cdot) =u_0$, and for almost all $t \in (0,T)$ we have $u(t) \in BV(\Omega)$, $\a(x,\nabla u(t)) \in X_2(\Omega)$ and the following conditions hold:
\begin{equation}
u_t(t) = \mathrm{div}(\a(x,\nabla u(t)))\quad  \hbox{in} \ \mathcal{D}^\prime(\Omega);
\end{equation}
\begin{equation}
\a(x,\nabla u(t)) \cdot D^s u(t) = f^0(x, D^s u(t));
\end{equation}
\begin{equation}
[\a(x,\nabla u(t)), \nu_\Omega] = 0 \qquad  \mathcal{H}^{N-1}-\mbox{a.e. on } \partial \Omega.
\end{equation}
\end{example}

\begin{remark}
Working similarly as in the previous results in this Section, we can get a characterisation of solutions to the Cauchy problem in $\mathbb{R}^N$. The energy functional $\mathcal{F}_{\mathcal{C}} : L^2(\mathbb{R}^N) \rightarrow [0, + \infty]$ takes the form
\begin{equation}
\mathcal{F}_{\mathcal{C}}(u):= \left\{ \begin{array}{ll} \displaystyle\int_{\mathbb{R}^N} f(x,Du)  \quad &\hbox{if} \ u \in BV(\mathbb{R}^N) \cap L^2(\mathbb{R}^N); \\ \\ + \infty \quad &\hbox{if} \ u \in  L^2(\mathbb{R}^N) \setminus BV(\mathbb{R}^N).
\end{array}\right.
\end{equation}
 Assuming that $f$ satisfies a growth condition similar to \ref{A1}, i.e. $|f(x,\xi)| \leq \psi(x) + M |\xi|$ with $\psi \in L^1(\mathbb{R}^N)$, there exists a unique strong solution of the abstract Cauchy problem
\begin{equation}\label{eq:cauchyproblem}
\left\{ \begin{array}{ll} u'(t) + \partial \mathcal{F}_{\mathcal{C}}(u(t)) \ni 0, \quad t \in [0,T] \\[5pt] u(0) = u_0. \end{array}\right.
\end{equation}
 If we additionally assume that condition \ref{A2} holds, one can prove that the characterisation of the subdifferential of $\mathcal{F}_{\mathcal{C}}$ takes the following form: $(u,v) \in \partial \mathcal{F_C}$ if and only if $u, v \in L^2(\mathbb{R}^N)$, $u \in BV(\mathbb{R}^N)$ and  there exists a vector field $\z \in X_2(\mathbb{R}^N)$ such that the following conditions hold:
\begin{equation}\label{e1DefCauchy}\z \in \partial_\xi f(x,\nabla u) \quad \mathcal{L}^N-\hbox{a.e. in } \mathbb{R}^N;\end{equation}
\begin{equation}\label{e2DefCauchy} -\mathrm{div}(\z) = v \quad \hbox{in} \ \mathbb{R}^N;\end{equation}
\begin{equation}\label{e3DefCauchy}\z \cdot D^s u = f^0(x,D^s u) \quad \hbox{as measures};\end{equation}
Given $u_0 \in L^2(\mathbb{R}^N)$, we say that $u$ is a {\it weak solution} of the Cauchy problem \eqref{eq:cauchyproblem} in $[0,T]$, if $u \in  C([0,T]; L^2(\mathbb{R}^N)) \cap W_{loc}^{1,2}(0, T; L^2(\mathbb{R}^N))$,   $u(0, \cdot) =u_0$, and for almost all $t \in (0,T)$ we have $u(t) \in BV(\mathbb{R}^N)$ and there exist vector fields $\z(t) \in X_2(\mathbb{R}^N)$ such that the following conditions hold:
\begin{equation}
\z(t) \in \partial_\xi f(x,\nabla u(t)) \quad \mathcal{L}^N-\hbox{a.e. in } \mathbb{R}^N;\end{equation}
\begin{equation}\label{e2DefP}  u_t(t) = \mathrm{div}(\z(t))\quad  \hbox{in} \ \mathcal{D}^\prime(\mathbb{R}^N);\end{equation}
\begin{equation}\label{e3DefP} \z(t) \cdot D^s u(t) = f^0(x,D^s u(t)) = f^0(x,\overrightarrow{D^s u(t)}) \vert
D^s u(t) \vert \quad \hbox{as measures}.
\end{equation}
Then, for all $u_0 \in L^2(\mathbb{R}^N)$ and all $T > 0$ there exists a unique weak solution to the Cauchy problem \eqref{eq:cauchyproblem} in $[0,T]$.
\end{remark}

\section{The Dirichlet Problem}\label{sec:dirichlet}

Recall that throughout the whole paper we assume that $\Omega$ is an open bounded subset of $\R^N$ with Lipschitz boundary, where $N \geq 2$. Moreover, the Lagrangian  $f \in C(\overline{\Omega} \times \mathbb{R}^N)$ is convex in the variable $\xi$ and satisfies conditions \ref{A1} and \ref{A2}. In this section, we study the Dirichlet problem
\begin{equation}\label{DirichletProb}
\left\{ \begin{array}{lll} u_t (t,x) = {\rm div} (\partial_\xi f(x, Du(t,x)))   \quad &\hbox{in} \ \ (0, T) \times \Omega; \\[5pt] u(t) = h \quad &\hbox{on} \ \ (0, T) \times  \partial\Omega; \\[5pt] u(0,x) = u_0(x) \quad & \hbox{in} \ \  \Omega, \end{array} \right.
\end{equation}
where $u_0 \in L^2(\Omega)$ and $h \in L^1(\partial\Omega, \mathcal{H}^{N-1})$. Similarly to the previous section, when $f \in C^1(\overline{\Omega} \times \R^N)$, the subdifferential is single-valued and when we set $\a(x,\xi) = \partial_\xi f(x, \xi)$, the problem reduces to the classical case
\begin{equation}
\left\{ \begin{array}{lll} u_t (t,x) = {\rm div}\,  \a(x, Du(t,x))   \quad &\hbox{in} \ \ (0, T) \times \Omega; \\[5pt] u(t) = h \quad &\hbox{on} \ \ (0, T) \times  \partial\Omega; \\[5pt] u(0,x) = u_0(x) \quad & \hbox{in} \ \  \Omega \end{array} \right.
\end{equation}
treated in \cite{ACMIbero} (see also \cite{ACMBook}), under a bit more restrictive assumptions than the ones used in this paper, and for the choice $f(x,\xi) = |\xi|$ we recover the total variation flow
\begin{equation}
\left\{ \begin{array}{lll} u_t (t,x) = {\rm div} \bigg(\frac{Du(t,x)}{|Du(t,x)|} \bigg)  \quad &\hbox{in} \ \ (0, T) \times \Omega; \\[5pt] u(t) = h \quad &\hbox{on} \ \ (0, T) \times  \partial\Omega; \\[5pt] u(0,x) = u_0(x) \quad & \hbox{in} \ \  \Omega \end{array} \right.
\end{equation}
that was studied in \cite{ABCM2}. As previously, our definition will be designed similarly to the one for the total variation flow, using a vector field $\z \in X_2(\Omega)$ to replace the possibly multivalued gradient $\partial_\xi f(x, Du(t,x))$.

Consider the energy functional $\mathcal{F}_{h} : L^2(\Omega) \rightarrow [0, + \infty]$ associated with problem \eqref{DirichletProb} and defined by
\begin{equation}
\mathcal{F}_{h}(u):= \left\{ \begin{array}{ll} \displaystyle\int_\Omega f(x,Du) + \int_{\partial\Omega} f^0(x,\nu_\Omega) \, |h - u| \, d\mathcal{H}^{N-1}  \quad &\hbox{if} \ u \in BV(\Omega) \cap L^2(\Omega); \\ \\ + \infty \quad &\hbox{if} \ u \in  L^2(\Omega) \setminus BV(\Omega).
\end{array}\right.
\end{equation}
Note that by \eqref{cpndI} the integral on the boundary can be written in the form
$$f^0(x, (h -u) \nu_\Omega) =f^0(x,\nu_\Omega) \, |h - u|.$$
Recall that by the results in \cite{Anzellotti2} (see Section \ref{sec:functionofmeasure}), under the assumption that $\tilde{g}(x, \xi, t)$ is continuous on $\overline{\Omega} \times \R^N \times [0, + \infty)$ and convex in $(\xi,t)$ for each fixed $x$, the functional $\mathcal{F}_{h}$ is lower semicontinuous respect to the $L^1$-convergence. Moreover, since $\mathcal{F}_{h}$ is convex, by the theory of maximal monotone operators (see \cite{Brezis}) there is a unique strong solution of the abstract Cauchy problem
\begin{equation}\label{CPDirichlet}
\left\{ \begin{array}{ll} u'(t) + \partial \mathcal{F}_{h}(u(t)) \ni 0, \quad t \in [0,T] \\[5pt] u(0) = u_0. \end{array}\right.
\end{equation}
To characterise the subdifferential of $\mathcal{F}_{h}$, we define the following operator.

\begin{definition}{\rm We say that $(u,v) \in \mathcal{A}_h$ if and only if $u, v \in L^2(\Omega)$, $u \in BV(\Omega)$ and  there exists a vector field $\z \in  X_2(\Omega)$  such that the following conditions hold:
\begin{equation}\label{e1DefD} \z \in \partial_\xi f(x,\nabla u) \quad \mathcal{L}^N-\hbox{a.e. in } \Omega;
\end{equation}
\begin{equation}\label{e2DefD}  -\mathrm{div}(\z) = v \quad \hbox{in} \ \Omega;
\end{equation}
\begin{equation}\label{e3DefD} \z \cdot D^s u = f^0(x,D^s u) \quad \hbox{as measures};
\end{equation}
\begin{equation}\label{e4DefD} [\z, \nu_\Omega] \in \mbox{sign}(h-u) f^0(x,\nu_\Omega) \qquad  \mathcal{H}^{N-1}-\mbox{a.e.  on} \ \partial \Omega.
\end{equation}
}
\end{definition}

\begin{lemma}\label{lm:Dirichlet} Under the assumptions \ref{A1}-\ref{A2}, we have
\begin{equation}\label{e3.lm:Dirichlet}
\mathcal{A}_h \subset \partial \mathcal{F}_h.
\end{equation}
\end{lemma}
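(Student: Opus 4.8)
The plan is to follow the same route as in the proof of Lemma \ref{lm:Neumann}, the only genuinely new feature being the boundary penalisation term in $\mathcal{F}_h$. So let $(u,v) \in \mathcal{A}_h$ and fix a vector field $\z \in X_2(\Omega)$ realising conditions \eqref{e1DefD}--\eqref{e4DefD}. To prove $(u,v)\in\partial\mathcal{F}_h$ I need to show that
\[
\int_\Omega (w-u)\,v\,dx \;\le\; \mathcal{F}_h(w)-\mathcal{F}_h(u)
\qquad\text{for every } w\in BV(\Omega)\cap L^2(\Omega),
\]
the inequality being trivial when $w\in L^2(\Omega)\setminus BV(\Omega)$. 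Since $v=-\mathrm{div}(\z)$ and both $u,w$ lie in $BV(\Omega)\cap L^2(\Omega)$ (so that $p'=2$ in the Green formula), the first step is to apply \eqref{Green} to $u$ and to $w$ and subtract, obtaining
\[
\int_\Omega (w-u)\,v\,dx = \int_\Omega(\z,Dw)-\int_\Omega(\z,Du)-\int_{\partial\Omega}(w-u)\,[\z,\nu_\Omega]\,d\mathcal{H}^{N-1}.
\]

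For the two interior pairings I would reason verbatim as in Lemma \ref{lm:Neumann}: writing $(\z,Dw)=\z\cdot\nabla w\,dx+\z\cdot D^s w$ and similarly for $u$, condition \eqref{e1DefD} gives the pointwise bound $\z\cdot(\nabla w-\nabla u)\le f(x,\nabla w)-f(x,\nabla u)$ $\mathcal{L}^N$-a.e.; moreover \eqref{e1DefD} forces $f^*(\cdot,\z)<\infty$ $\mathcal{L}^N$-a.e., so Proposition \ref{prop:resultfrombs} (namely \eqref{good1}) applies and yields $\int_\Omega\z\cdot D^s w\le\int_\Omega f^0(x,\overrightarrow{D^s w})\,d|D^s w|$, while \eqref{e3DefD} gives the equality $\int_\Omega\z\cdot D^s u=\int_\Omega f^0(x,\overrightarrow{D^s u})\,d|D^s u|$. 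Recalling that $\int_\Omega f(x,Dw)=\int_\Omega f(x,\nabla w)\,dx+\int_\Omega f^0(x,\overrightarrow{D^s w})\,d|D^s w|$, these estimates combine to give
\[
\int_\Omega(\z,Dw)-\int_\Omega(\z,Du)\le\int_\Omega f(x,Dw)-\int_\Omega f(x,Du).
\]

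The new point, and the only one requiring any care, is the boundary term. Using \eqref{e4DefD} I would write $[\z,\nu_\Omega](x)=\sigma(x)\,f^0(x,\nu_\Omega(x))$ with $\sigma(x)\in\mbox{sign}(h(x)-u(x))$ for $\mathcal{H}^{N-1}$-a.e.\ $x\in\partial\Omega$, and use that $f^0(\cdot,\nu_\Omega)\ge0$. It then suffices to check the elementary scalar inequality $-(w-u)\sigma\le|h-w|-|h-u|$, which I would do by cases: if $h>u$ then $\sigma=1$ and it reduces to $h-w\le|h-w|$; if $h<u$ then $\sigma=-1$ and it reduces to $w-h\le|h-w|$; if $h=u$ then $|\sigma|\le1$ gives $(u-w)\sigma\le|u-w|=|h-w|$. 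Multiplying through by $f^0(x,\nu_\Omega)\ge0$ and integrating over $\partial\Omega$ yields
\[
-\int_{\partial\Omega}(w-u)\,[\z,\nu_\Omega]\,d\mathcal{H}^{N-1}\le\int_{\partial\Omega}f^0(x,\nu_\Omega)\,|h-w|\,d\mathcal{H}^{N-1}-\int_{\partial\Omega}f^0(x,\nu_\Omega)\,|h-u|\,d\mathcal{H}^{N-1}.
\]
Adding this to the interior estimate gives exactly $\int_\Omega(w-u)\,v\,dx\le\mathcal{F}_h(w)-\mathcal{F}_h(u)$, hence $(u,v)\in\partial\mathcal{F}_h$. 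The expected obstacle is nothing deep: one only has to make sure the subdifferential relation \eqref{e4DefD} pairs correctly against the convex subdifferential of $t\mapsto f^0(x,\nu_\Omega)\,|h-t|$; everything else is a transcription of Lemma \ref{lm:Neumann}.
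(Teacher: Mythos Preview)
Your proof is correct and follows essentially the same line as the paper's. The only cosmetic difference is in the boundary term: the paper adds and subtracts $h$, obtaining $-\int_{\partial\Omega}(w-h)[\z,\nu_\Omega]\,d\mathcal{H}^{N-1}+\int_{\partial\Omega}(u-h)[\z,\nu_\Omega]\,d\mathcal{H}^{N-1}$, and then applies \eqref{good2} to the $(w-h)$ piece and \eqref{e4DefD} to the $(u-h)$ piece, whereas you carry out an equivalent direct case analysis on $\mathrm{sign}(h-u)$; the nonnegativity $f^0(\cdot,\nu_\Omega)\ge 0$ that you invoke is not listed among the hypotheses of the lemma, but it is the standing assumption of Theorem~\ref{thm:Dirichlet} and is implicitly at work in the paper's argument as well.
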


\begin{proof}
Let $(u,v) \in \mathcal{A}_h$ and $\z \in X_2(\Omega)$ satisfying \eqref{e1DefD}-\eqref{e4DefD}. Given $w \in L^2(\Omega) \cap BV(\Omega)$, by \eqref{e1DefD} we have
\begin{equation}\label{e1.lm:Dirichlet}
f(x,\nabla w(x)) - f(x,\nabla u(x)) \geq \z(x) \cdot (\nabla w(x) - \nabla u(x)), \quad \quad  \mathcal{L}^N-\hbox{a.e. in } \Omega.
\end{equation}
Then, since $\z \in X_2(\Omega)$ satisfies \eqref{e1DefD}-\eqref{e4DefD}, applying Green formula \eqref{Green} and having in mind properties \eqref{good1}, \eqref{good2} and \eqref{e1.lm:Dirichlet}, we have
\begin{align}
\int_\Omega (w - u) \, v \, dx &= - \int_\Omega \mathrm{div}(\z) (w - u) \, dx \\
&= \int_\Omega (\z, Dw) - \int_{\partial\Omega} w \, [\z, \nu_\Omega] \, d\mathcal{H}^{N-1} - \int_\Omega (\z, Du) + \int_{\partial\Omega} u \, [\z, \nu_\Omega] \, d\mathcal{H}^{N-1} \\
&= \int_\Omega \z \cdot \nabla w \, dx + \int_\Omega \z \cdot D^s w - \int_\Omega \z \cdot \nabla u \, dx - \int_\Omega \z \cdot D^s u \\
&\qquad\qquad\qquad  - \int_{\partial\Omega} (w - h) \, [\z, \nu_\Omega] \, d\mathcal{H}^{N-1} + \int_{\partial\Omega} (u - h) \, [\z, \nu_\Omega] \,  d\mathcal{H}^{N-1}
\end{align}
\begin{align}
&\leq \int_\Omega f(x, \nabla w) \, dx - \int_\Omega f(x, \nabla u) \, dx + \int_\Omega \z \cdot D^s w - \int_\Omega f^0(x, D^s u) \\
&\qquad\qquad\qquad - \int_{\partial\Omega} (w - h) \, [\z, \nu_\Omega] \, d\mathcal{H}^{N-1} - \int_{\partial\Omega} f^0(x,\nu_\Omega) \, |h - u| \, d\mathcal{H}^{N-1} \\
&\leq \int_\Omega f(x, \nabla w) \, dx + \int_\Omega f^0(x, D^s w) - \int_\Omega f(x, \nabla u) \, dx -\int_\Omega f^0(x, D^s u) \\
&\qquad\qquad\qquad + \int_{\partial\Omega} f^0(x,\nu_\Omega) |h-w| \, d\mathcal{H}^{N-1} - \int_{\partial\Omega} f^0(x,\nu_\Omega) \, |h - u| \, d\mathcal{H}^{N-1} \\
&= \int_\Omega f(x,Dw) + \int_{\partial\Omega} f^0(x,\nu_\Omega) |h-w| \, d\mathcal{H}^{N-1} \\
&\qquad\qquad\qquad -  \int_\Omega f(x,Du) - \int_{\partial\Omega} f^0(x,\nu_\Omega) \, |h - u| \, d\mathcal{H}^{N-1} \\
&= \mathcal{F}_{h}(w) - \mathcal{F}_{h}(u),
\end{align}
which concludes the proof.
\end{proof}

\begin{theorem}\label{thm:Dirichlet}
Suppose that $f^0(x,\nu_\Omega) \geq 0$ $\mathcal{H}^{N-1}$-a.e. on $\partial\Omega$. Then, under the assumptions \ref{A1}-\ref{A2} we have
$$\partial \mathcal{F}_h = \mathcal{A}_h,$$
and $D(\mathcal{A}_h)$ is dense in $L^2(\Omega)$.
\end{theorem}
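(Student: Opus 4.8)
The plan is to follow the strategy of the proof of Theorem~\ref{thm:neumann}. By Lemma~\ref{lm:Dirichlet}, $\mathcal{A}_h$ is monotone and contained in the maximal monotone operator $\partial\mathcal{F}_h$, so by Minty's theorem it suffices to verify the range condition: for every $g\in L^2(\Omega)$ there is $u\in D(\mathcal{A}_h)$ with $(u,g-u)\in\mathcal{A}_h$. Unwinding the definition, I must produce $u\in BV(\Omega)\cap L^2(\Omega)$ and a vector field $\z\in X_2(\Omega)$ with $\z\in\partial_\xi f(x,\nabla u)$ $\mathcal{L}^N$-a.e., $-\mathrm{div}(\z)=g-u$ in $\mathcal{D}'(\Omega)$, $\z\cdot D^su=f^0(x,D^su)$ as measures, and $[\z,\nu_\Omega]\in\mbox{sign}(h-u)\,f^0(x,\nu_\Omega)$ $\mathcal{H}^{N-1}$-a.e. on $\partial\Omega$.

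I would obtain $u$ and $\z$ via the Fenchel--Rockafellar duality theorem with exactly the same data as in the Neumann case: $U=W^{1,1}(\Omega)\cap L^2(\Omega)$, $V=L^1(\partial\Omega,\mathcal{H}^{N-1})\times L^1(\Omega;\mathbb{R}^N)$, $Au=(u|_{\partial\Omega},\nabla u)$, $G(u)=\frac12\int_\Omega u^2-\int_\Omega ug$, and $E_1(\overline{p})=\int_\Omega f(x,\overline{p})\,dx$; the only change is that the boundary part of $E$ is now the genuine functional $E_0(p_0)=\int_{\partial\Omega}f^0(x,\nu_\Omega)\,|h-p_0|\,d\mathcal{H}^{N-1}$. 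Since $|f^0(x,\nu_\Omega)|\le M$, $E$ is finite and $L^1$-continuous, so the qualification hypothesis holds (e.g. at $u_0\equiv0$). A direct computation gives $E_1^*(\overline{p}^*)=\int_\Omega f^*(x,\overline{p}^*)\,dx$ and $E_0^*(p_0^*)=\int_{\partial\Omega}h\,p_0^*\,d\mathcal{H}^{N-1}$ if $|p_0^*|\le f^0(\cdot,\nu_\Omega)$ $\mathcal{H}^{N-1}$-a.e. on $\partial\Omega$ and $+\infty$ otherwise --- and it is exactly here that the hypothesis $f^0(x,\nu_\Omega)\ge0$ enters, to ensure that this constraint set (hence the domain of the dual problem) is nonempty. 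As in the Neumann proof, $G^*(A^*p^*)<\infty$ forces $A^*p^*=-\mathrm{div}(\overline{p}^*)\in L^2(\Omega)$ and $p_0^*=-[\overline{p}^*,\nu_\Omega]$, so after eliminating $p_0^*$ the dual problem becomes
\[
\sup_{\overline{p}^*}\ \Big\{-\int_{\partial\Omega}h\,[\overline{p}^*,\nu_\Omega]\,d\mathcal{H}^{N-1}-\int_\Omega f^*(x,-\overline{p}^*)\,dx-\frac12\int_\Omega(-\mathrm{div}(\overline{p}^*)+g)^2\,dx\Big\}
\]
over $\overline{p}^*\in L^\infty(\Omega;\mathbb{R}^N)$ with $\mathrm{div}(\overline{p}^*)\in L^2(\Omega)$ and $|[\overline{p}^*,\nu_\Omega]|\le f^0(\cdot,\nu_\Omega)$ a.e. on $\partial\Omega$. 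The primal functional --- the extension $\mathcal{F}_h+G$ of $E\circ A+G$ to $BV(\Omega)\cap L^2(\Omega)$, which by the relaxation results of Section~\ref{sec:functionofmeasure} is convex and $L^1$-lower semicontinuous, and is coercive (the boundary term being nonnegative because $f^0(x,\nu_\Omega)\ge0$) --- attains its minimum at some $u$; hence there is no duality gap and the dual problem admits a maximiser $\overline{p}^*$.

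Setting $\z:=-\overline{p}^*$ gives $\z\in X_2(\Omega)$ and $|[\z,\nu_\Omega]|\le f^0(x,\nu_\Omega)$ on $\partial\Omega$. I would then take $u_n\in W^{1,1}(\Omega)\cap L^2(\Omega)$ with the same trace as $u$, converging strictly to $u$ in $BV(\Omega)$ and in $L^2(\Omega)$, and apply the $\varepsilon$-subdifferentiability inequalities \eqref{eq:epsilonsubdiff11N}--\eqref{eq:epsilonsubdiff21N}. The inequality associated with $G$ yields $A^*p^*\in\partial G(u)=\{u-g\}$, i.e. $-\mathrm{div}(\z)=g-u$. The inequality associated with $E$ splits into its $E_1$- and $E_0$-parts. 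The $E_1$-part is handled exactly as in Theorem~\ref{thm:neumann}: one rewrites $\int_\Omega\langle\overline{p}^*,\nabla u_n\rangle$ via Green's formula~\eqref{Green} (now keeping the boundary term, which is fixed because the traces are) to identify its limit with $\int_\Omega(\overline{p}^*,Du)$, and then the Reshetnyak theorem (Theorem~\ref{thm:reshetnyak}) together with Proposition~\ref{prop:resultfrombs} --- applicable since $f^*(\cdot,\z)<\infty$ a.e., the dual value being finite --- splits the resulting identity into $\z\in\partial_\xi f(x,\nabla u)$ a.e. and $\z\cdot D^su=f^0(x,D^su)$. Since the traces of $u_n$ equal that of $u$, the $E_0$-part reads $0\le E_0(u|_{\partial\Omega})+E_0^*(-p_0^*)-\langle -p_0^*,u|_{\partial\Omega}\rangle\le\varepsilon_n$, forcing $-p_0^*\in\partial E_0(u|_{\partial\Omega})$; as $\partial E_0(w)=f^0(\cdot,\nu_\Omega)\,\mbox{sign}(w-h)$ and $-p_0^*=[\overline{p}^*,\nu_\Omega]=-[\z,\nu_\Omega]$, this is precisely $[\z,\nu_\Omega]\in\mbox{sign}(h-u)\,f^0(x,\nu_\Omega)$ on $\partial\Omega$. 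This establishes the range condition, hence $\partial\mathcal{F}_h=\mathcal{A}_h$. Density of $D(\mathcal{A}_h)=D(\partial\mathcal{F}_h)$ in $L^2(\Omega)$ then follows from \cite[Proposition~2.11]{Brezis} once one notes that $\mathcal{F}_h$ is finite on all of $BV(\Omega)\cap L^2(\Omega)$ (using $|f^0(x,\nu_\Omega)|\le M$ and $h\in L^1(\partial\Omega)$), so $D(\mathcal{F}_h)=BV(\Omega)\cap L^2(\Omega)$ is dense in $L^2(\Omega)$.

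I expect the main difficulty to be the passage to the limit in the $\varepsilon$-subdifferentiability inequalities. As in the Neumann case, the bulk term requires the Reshetnyak theorem to keep track of the singular part of $Du$, after which Proposition~\ref{prop:resultfrombs} is what allows one to separate the single limiting identity into the two pointwise relations on $\nabla u$ and on $D^su$. The genuinely new point is the boundary: one must arrange that the approximants $u_n$ carry the trace of $u$ (or at least have traces converging in $L^1(\partial\Omega)$ under strict convergence) so that the $E_0$-inequality survives in the limit and upgrades the bare bound $|[\z,\nu_\Omega]|\le f^0(x,\nu_\Omega)$, which comes for free from the dual feasible set, to the full subdifferential inclusion $[\z,\nu_\Omega]\in\mbox{sign}(h-u)\,f^0(x,\nu_\Omega)$. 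The hypothesis $f^0(x,\nu_\Omega)\ge0$ cannot be dropped, as it is exactly what keeps the dual feasible set nonempty (equivalently, the boundary term in $\mathcal{F}_h$ nonnegative).
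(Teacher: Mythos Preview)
Your proposal is correct and follows essentially the same approach as the paper's proof: the same duality setup $(U,V,A,E_0,E_1,G)$, the same computation of conjugates, the same use of strict approximants with fixed trace together with the $\varepsilon$-subdifferentiability inequalities, Reshetnyak's theorem, and Proposition~\ref{prop:resultfrombs}. The only cosmetic difference is that you phrase the boundary conclusion as $-p_0^*\in\partial E_0(u|_{\partial\Omega})$, whereas the paper argues directly that the boundary integrand $f^0(x,\nu_\Omega)|u-h|+p_0^*(u-h)$ is nonnegative (from $|p_0^*|\le f^0(x,\nu_\Omega)$) and constant in $n$, hence zero; these are equivalent, and both rely on the splitting of the single $E$-inequality into its $E_0$- and $E_1$-parts via the pointwise Fenchel--Young inequalities.
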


\begin{proof}
{\bf  Step 1.} Like in the proof of Theorem \ref{thm:neumann}, we only need to show that $\mathcal{A}_h$ satisfies the range condition, i.e.
\begin{equation}\label{RCondD}
\hbox{Given} \ g \in  L^2(\Omega), \ \exists \, u \in D(\mathcal{A}_h) \ s.t. \ \  u + \mathcal{A}_h(u) \ni g.
\end{equation}
 We can rewrite the above as
$$u + \mathcal{A}_h(u) \ni g \iff (u, g-u) \in \mathcal{A}_h,$$
so we need to show that there exists a bounded vector field $\z \in X_2(\Omega)$  such that the following conditions hold:
$$ \z \in \partial_\xi f(x,\xi) \quad \mathcal{L}^N-\hbox{a.e. in } \Omega; $$
$$  -\mathrm{div}(\z) = g-u \quad \hbox{in} \ \Omega; $$
$$ \z \cdot D^s u = f^0(x,D^s u) \quad \hbox{as measures};$$
$$ [\z, \nu_\Omega] \in \mbox{sign}(h-u) f^0(x,\nu_\Omega) \qquad  \mathcal{H}^{N-1}-\mbox{a.e. on} \ \partial \Omega.$$
As in the proof of Theorem \ref{thm:neumann}, we will prove \eqref{RCondD} by means of the Fenchel-Rockafellar duality theorem. We will employ the same function spaces and operator $A$ as in that proof. We will also use the same functionals $E_1$ and $G$, the only difference being that the functional $E_0$ now incorporates the Dirichlet boundary condition. For convenience, we shortly present again the whole framework.

We set $U = W^{1,1}(\Omega) \cap L^2(\Omega)$, $V = L^1(\partial\Omega,\mathcal{H}^{N-1}) \times L^1(\Omega;\mathbb{R}^N)$, and the operator $A: U \rightarrow V$ is defined by the formula
$$ Au = (u|_{\partial\Omega}, \nabla u).$$
Note that $A$ is a linear and continuous operator and the dual spaces to $U$ and $V$ are
$$ U^* = (W^{1,1}(\Omega) \cap L^2(\Omega))^*, \qquad V^* =  L^\infty(\partial\Omega,\mathcal{H}^{N-1}) \times L^\infty(\Omega;\mathbb{R}^N).$$
We denote the points $p \in V$ in the following way: $p = (p_0, \overline{p})$, where $p_0 \in L^1(\partial\Omega,\mathcal{H}^{N-1})$ and $\overline{p} \in L^1(\Omega;\mathbb{R}^N)$. We will also use a similar notation for points $p^* \in V^*$. Then, we set $E: L^1(\partial\Omega, \mathcal{H}^{N-1}) \times L^1(\Omega;\mathbb{R}^N) \rightarrow \mathbb{R}$ by the formula
\begin{equation}\label{Ieq:definitionofEND}
E(p_0, \overline{p}) = E_0(p_0) + E_1(\overline{p}), \quad E_0(p_0) = \int_{\partial \Omega} f^0(x, \nu_\Omega) \vert p_0 -  h \vert \,d\mathcal{H}^{N-1}, \quad E_1(\overline{p}) = \int_\Omega f(x,\overline{p}) \, dx.
\end{equation}
We also set $G:W^{1,1}(\Omega) \cap L^2(\Omega) \rightarrow \mathbb{R}$ by
$$G(u):= \frac12 \int_\Omega u^2 \, dx - \int_\Omega ug \, dx.$$

{\flushleft \bf Step 2.}  Since $G$ is defined by the same formula as in the proof of Theorem \ref{thm:neumann}, we again have that whenever $u^* \in U^*$ is such that $G^*(u^*) < \infty$, it holds that $u^* \in L^2(\Omega)$. Moreover, since the operator $A$ is defined by the same formula as in the proof of Theorem \ref{thm:neumann},  for any $p^* \in D(A^*)$ with $G^*(A^* p^*) < \infty$ conditions \eqref{eq:neumanndiv0} and \eqref{eq:pandpoagree} remain true, so
\begin{equation}\label{eq:dirichletdiv0}
A^* p^* =- \mathrm{div}(\overline{p}^*)
\end{equation}
and
\begin{equation}\label{eq:pandpoagreeN}
p_0^* = - [\overline{p}^*, \nu_\Omega] \qquad \mathcal{H}^{N-1}-\mbox{a.e. on } \partial\Omega.
\end{equation}
Now,  observe that $E^* = E_0^* + E_1^*$, and compute the convex conjugate of the functional~$E_0$. The functional $E_0^*: L^\infty(\partial\Omega, \mathcal{H}^{N-1}) \rightarrow \mathbb{R} \cup \{ \infty \}$ is given by the formula
\begin{equation}\label{eq:formulafore0star}
E_0^*(p_0^*) = \left\{ \begin{array}{ll} \displaystyle\int_{\partial\Omega} h \, p_0^* \, d\mathcal{H}^{N-1} \quad &\hbox{if} \ \  |p_0^*| \leq f^0(x,\nu_\Omega) \quad \mathcal{H}^{N-1}-\mbox{a.e. on } \partial\Omega; \\[10pt] +\infty \quad &\hbox{otherwise.}   \end{array}  \right.
\end{equation}
In fact, we only need to observe that
$$E_0^*(p_0^*) = \sup_{p_0 \in L^1(\partial\Omega, \mathcal{H}^{N-1})} \left\{ \int_{\partial \Omega} p_0 \, p_0^* \, d\mathcal{H}^{N-1} - \int_{\partial\Omega} f^0(x,\nu_\Omega) |p_0 - h| \, d\mathcal{H}^{N-1} \right\}$$
$$= \sup_{p_0 \in L^1(\partial\Omega,\mathcal{H}^{N-1})} \left\{ \int_{\partial \Omega} (p_0 - h) \, p_0^* \, d\mathcal{H}^{N-1} + \int_{\partial\Omega} h \, p_0^* \, d\mathcal{H}^{N-1}  - \int_{\partial\Omega} f^0(x, \nu_\Omega) |p_0 - h| \, d\mathcal{H}^{N-1} \right\},$$
and then formula \eqref{eq:formulafore0star} follows due to our assumption that $f^0(x,\nu_\Omega) \geq 0$.

The functional  $E_1^*: L^\infty(\Omega; \mathbb{R}^N) \rightarrow [0,\infty]$  was already computed in the proof of Theorem \ref{thm:neumann} and is given by the formula
$$E_1^*(\overline{p}^*) = \int_\Omega f^*(x,\overline{p}^*) \, dx. $$

{\flushleft \bf Step 3.} Now, we will apply the Fenchel-Rockafellar duality theorem to the primal problem of the form
\begin{equation}\label{primal1}
\inf_{u \in U} \bigg\{ E(Au) + G(u) \bigg\}
\end{equation}
with $E$ and $G$ defined as above. For $u_0 \equiv 0$ we have $E(Au_0) = \int_{\partial\Omega} |h| \, d\mathcal{H}^{N-1} < \infty$, $G(u_0) = 0 < \infty$ and $E$ is continuous at $0$. Then, by the Fenchel-Rockafellar duality theorem we have
\begin{equation}\label{DFR1-TVflow}\inf \eqref{eq:primal} = \sup \eqref{eq:dual}
\end{equation}
and
\begin{equation}\label{DFR2-TVflow} \hbox{the dual problem \eqref{eq:dual} admits at least one solution,}
\end{equation}
where the dual problem is given by
\begin{equation}\label{dual1}
\sup_{p^* \in L^\infty(\partial\Omega, \mathcal{H}^{N-1}) \times L^\infty(\Omega; \mathbb{R}^N)} \bigg\{ - E_0^*(-p_0^*) - E_1^*(-\overline{p}^*) - G^*(A^* p^*) \bigg\}.
\end{equation}
Keeping in mind the above calculations, we set $\mathcal{Z}$ to be the subset of $V^*$ such that the dual problem does not immediately return $-\infty$, namely
\begin{align}
\mathcal{Z} = \bigg\{ p^* \in L^\infty(\partial\Omega, \mathcal{H}^{N-1}) \times &X_2(\Omega): \quad f^*(\cdot,\overline{p}^*) < \infty \quad \mathcal{L}^N-\mbox{a.e in } \Omega; \\
& |p_0^*| \leq f^0(x,\nu_\Omega) \quad \mathcal{H}^{N-1}-\mbox{a.e. on } \partial\Omega; \quad p_0^* = -[\overline{p}^*, \nu_\Omega] \bigg\}.
\end{align}
Hence, we may rewrite the dual problem as
\begin{equation}\label{dual2}
\sup_{p^* \in \mathcal{Z}} \bigg\{ - E_0^*(-p_0^*)- E_1^*(-\overline{p}^*) - G^*(A^* p^*)\bigg\},
\end{equation}
so finally the dual problem takes the form
\begin{equation}\label{dual3}
\sup_{p^* \in \mathcal{Z}} \bigg\{ \int_{\partial\Omega} h \, p_0^* \, d\mathcal{H}^{N-1} + \int_\Omega f^*(x,\overline{p}^*) \, dx - G^*(A^* p^*)\bigg\}.
\end{equation}
In  fact, in light of the constraint $p_0^* = - [\overline{p}^*, \nu_\Omega]$, we may simplify the dual problem a bit: observe that the constraint that $|p_0^*| \leq f^0(x,\nu_\Omega)$ $\mathcal{H}^{N-1}$-a.e. on $\partial\Omega$ in the dual problem follows from the other two conditions due to property \eqref{good2}.

Now, consider the energy functional $\mathcal{G}_f : L^2(\Omega) \rightarrow (-\infty, + \infty]$ defined by
\begin{equation}\label{11fuctme}
\mathcal{G}_f (v):= \left\{ \begin{array}{ll} \mathcal{F}_h(v) + G(v)   \quad &\hbox{if} \ v \in BV(\Omega) \cap L^2(\Omega); \\ \\ + \infty \quad &\hbox{if} \ v \in  L^2(\Omega) \setminus BV(\Omega).\end{array}\right.
\end{equation}
As in the proof of Theorem \ref{thm:neumann}, this functional is the extension of the functional $E \circ A + G$ (well-defined for functions in $W^{1,1}(\Omega) \cap L^2(\Omega)$) to the space $BV(\Omega) \cap L^2(\Omega)$. Since $\mathcal{G}_h$ is coercive, convex and lower semicontinuous, the minimisation problem
$$\min_{v \in L^2(\Omega)} \mathcal{G}_h(v)$$ admits a solution $u$ and we have
\begin{equation}\label{minimun1}
\min_{v \in L^2(\Omega)} \mathcal{G}_h(v) = \min_{v \in U} \bigg\{ E(Av) + G(v) \bigg\}.
\end{equation}
Let us take a sequence $u_n \in W^{1,1}(\Omega)$ which has the same trace as $u$ and converges strictly to $u$,  a minimiser of $\mathcal{G}_h$, and also $u_n \to u$ in $L^2(\Omega)$; then, it is a minimising sequence in~\eqref{eq:primal}.  Since we have  \eqref{DFR1-TVflow} and \eqref{DFR2-TVflow}, we may use the $\varepsilon-$subdifferentiability property of minimising sequences given in \eqref{eq:epsilonsubdiff11N} and \eqref{eq:epsilonsubdiff21N}. Let $p^*$ be a solution of the dual problem. By equation \eqref{eq:epsilonsubdiff21N}, for every $w \in L^2(\Omega)$, we have
$$G(w) - G(u_n) \geq \langle (w -u_n), A^* p^* \rangle_{U,U^*}  -\varepsilon_n.$$
Since $G^*(A^* p^*) < \infty$, we have that $A^* p^* \in L^2(\Omega)$. Hence,
$$G(w) - G(u) \geq \langle (w -u), A^* p^* \rangle_{ U,U^*}  = \langle (w -u), A^* p^* \rangle_{L^2(\Omega)} ,$$
and consequently,
$$ A^* p^* \in  \partial_{L^2(\Omega)} G(u) = \{ u - g \}. $$
Therefore, by \eqref{eq:dirichletdiv0} we get
\begin{equation}\label{div1}
-\mathrm{div}(\overline{p}^*) = u -g.
 \end{equation}
On the other hand, equation \eqref{eq:epsilonsubdiff11N} gives
\begin{align}
0 \leq \int_{\partial\Omega} \bigg(f^0 &(x, \nu_\Omega)| u_n - h| + p_0^* ( u_n - h)  \bigg) \, d\mathcal{H}^{N-1}  \\
&+ \int_\Omega  f(x, \nabla u_n) \, dx + \int_\Omega  f^*(x, \overline{p}^*) \, dx - \langle \nabla u_n, -\overline{p}^* \rangle_{L^1(\Omega;\mathbb{R}^N),L^\infty(\Omega;\mathbb{R}^N)} \leq \varepsilon_n.
\end{align}
Because the trace of $u_n$ is fixed (and equal to the trace of $u$), the integral on $\partial\Omega$ does not change with $n$; hence, it has to equal zero. Thus, keeping in mind that $p_0^* = [-\overline{p}^*, \nu_\Omega]$ and $f^0(x,\nu_\Omega) \geq 0$, we get
\begin{equation}\label{BBCond}
[-\overline{p}^*, \nu_\Omega] \in \mbox{sign}(h-u) f^0(x,\nu_\Omega) \qquad  \mathcal{H}^{N-1}-\mbox{a.e. on} \ \partial \Omega.
\end{equation}
Hence, we have
\begin{equation}\label{inerior1}
0 \leq \int_\Omega  f(x,\nabla u_n) \, dx +  \int_\Omega  f^*(x, \overline{p}^*) \, dx \leq \int_\Omega \nabla u_n \cdot (- \overline{p}^*) \, dx + \varepsilon_n.
\end{equation}
Finally, keeping in mind that $-\mathrm{div}(\overline{p}^*) = u-g$ and again using the fact that the trace of $u_n$ is fixed and equal to the trace of $u$, by the Gauss-Green formula we get
\begin{align}
\int_\Omega \nabla u_n \cdot \overline{p}^* \, dx &= - \int_\Omega u_n \, \mathrm{div}(\overline{p}^*) \, dx + \int_{\partial\Omega} u_n \, [\overline{p}^*, \nu_\Omega] \, d\mathcal{H}^{N-1} \\
&=\int_\Omega u_n \, (u -g) \, dx +\int_{\partial\Omega} u \, [\overline{p}^*, \nu_\Omega] \, d\mathcal{H}^{N-1}.
\end{align}
Then, applying again the Gauss-Green formula, we have
\begin{align}
\lim_{n \to \infty}  \int_\Omega \nabla u_n \cdot \overline{p}^* \, dx &= \int_\Omega u \, (u-g) \, dx +\int_{\partial\Omega} u \, [\overline{p}^*, \nu_\Omega] \, d\mathcal{H}^{N-1} \\
&= -  \int_\Omega u \,  \mathrm{div}(\overline{p}^*) \, dx +\int_{\partial\Omega} u \, [\overline{p}^*, \nu_\Omega] \, d\mathcal{H}^{N-1} = \int_\Omega (\overline{p}^*, Du).
\end{align}
But then, passing to the limit in equation \eqref{inerior1}, by Reschetnyak theorem (Theorem \ref{thm:reshetnyak}) we get that
\begin{equation}
\int_\Omega f(\cdot, \nabla u) \, dx + \int_{\overline{\Omega}} f^0\left(\cdot, \frac{d D^s u}{d|D^s u|}\right) d|D^s u| + \int_\Omega f^*(x,\overline{p}^*) \, dx = \int_\Omega -\overline{p}^* \cdot \nabla u \, dx + \int_\Omega (-\overline{p}^*, Du)^s.
\end{equation}
From this, the required results on the absolutely continuous part and the singular part follow. Indeed, by the definition of the dual function
\begin{equation}
\int_\Omega f(\cdot, \nabla u) \, dx + \int_\Omega f^*(x,\overline{p}^*) \, dx \geq \int_\Omega -\overline{p}^* \cdot \nabla u \, dx,
\end{equation}
and by Proposition \ref{prop:resultfrombs} we get that
\begin{equation}
\int_{\overline{\Omega}} f^0 \left(\cdot, \frac{d D^s u}{d|D^s u|}\right) d|D^s u| \geq -\int_\Omega \overline{p}^*\cdot Du^s,
\end{equation}
so both of these inequalities have to be equalities.  But, the first one means exactly that $-\overline{p}^* \in \partial_\xi f(x,\nabla u)$, which implies \eqref{e1DefD} for the choice $\mathbf{z} = - \overline{p}^*$, and the second one means that $- \overline{p}^*\cdot Du^s = f^0(x,D^s u)$, so \eqref{e3DefD} holds. We already have the divergence constraint \eqref{e2DefD} by \eqref{div1} and the boundary constraint \eqref{e4DefD} follows from \eqref{BBCond}.

Finally, by \cite[Proposition 2.11]{Brezis} we have
$$ D(\partial \mathcal{F}_h) \subset  D(\mathcal{F}_h) =  BV(\Omega) \cap L^2(\Omega) \subset \overline{D(\mathcal{F}_h)}^{L^2(\Omega)} \subset \overline{D(\partial \mathcal{F}_h)}^{L^2(\Omega)},$$
from which follows the density of the domain.
\end{proof}

Our concept of solutions of the Dirichlet problem \eqref{DirichletProb} is the following:

\begin{definition}\label{def:dirichlet1p}
{\rm  Given $u_0 \in L^2(\Omega)$, we say that $u$ is a {\it weak solution} of the Dirichlet problem \eqref{DirichletProb} in $[0,T]$, if $u \in C([0,T]; L^2(\Omega)) \cap W_{loc}^{1,2}(0, T; L^2(\Omega))$,   $u(0, \cdot) =u_0$, and for almost all $t \in (0,T)$
\begin{equation}\label{def:dirichletflow}
u_t(t, \cdot) + \mathcal{A}_h u(t, \cdot) \ni 0.
\end{equation}
In other words, if $u(t) \in BV(\Omega)$ and there exist vector fields $\z(t) \in X_2(\Omega)$ such that the following conditions hold:
\begin{equation}\label{e1DefPD}\z(t) \in \partial_\xi f(x,\nabla u(t)) \quad \mathcal{L}^N-\hbox{a.e. in } \Omega;\end{equation}
\begin{equation}\label{e2DefPD}  u_t(t) = \mathrm{div}(\z(t))\quad  \hbox{in} \ \mathcal{D}^\prime(\Omega);\end{equation}
\begin{equation}\label{e3DefPD} \z(t) \cdot D^s u(t) = f^0(x,D^s u(t)) = f^0(x,\overrightarrow{D^s u(t)}) \vert
D^s u(t) \vert \quad \hbox{as measures};\end{equation}
\begin{equation}\label{e4DefPD}[\z(t), \nu_\Omega] \in \mbox{sign}(h-u(t)) f^0(x,\nu_\Omega) \qquad  \mathcal{H}^{N-1}-\mbox{a.e. on } \partial \Omega.\end{equation}

}
\end{definition}

Then, using the classical theory of maximal monotone operators (see for instance \cite{Brezis}), as a consequence of Theorem \ref{thm:Dirichlet} we have the following existence and uniqueness theorem.

\begin{theorem}\label{ExistUnique}
Suppose that $f^0(x,\nu_\Omega) \geq 0$ $\mathcal{H}^{N-1}$-a.e. on $\partial\Omega$. Then, under the assumptions \ref{A1}-\ref{A2}, for any $u_0 \in L^2(\Omega)$ and all $T > 0$ there exists a unique weak solution of the Dirichlet problem \eqref{DirichletProb} in $[0,T]$.
\end{theorem}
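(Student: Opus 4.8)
The plan is to obtain this theorem as a direct consequence of Theorem \ref{thm:Dirichlet} together with the classical theory of gradient flows of convex functionals in Hilbert spaces (see \cite{Brezis}). First I would record that $\mathcal{F}_h : L^2(\Omega) \to (-\infty,+\infty]$ is proper (it is finite on $W^{1,1}(\Omega)\cap L^2(\Omega)\cap C^1(\overline{\Omega})$, which is nonempty), convex, and lower semicontinuous with respect to $L^2(\Omega)$-convergence: convexity is immediate, and lower semicontinuity follows from the Anzellotti-type relaxation result recalled in Section \ref{sec:functionofmeasure} (applicable since assumptions \ref{A1}--\ref{A2} and convexity of $f$ guarantee that $\tilde f$ from \eqref{funcrecesion} is continuous and convex in $(\xi,t)$) together with the assumption $f^0(x,\nu_\Omega)\ge 0$, bearing in mind that $L^2(\Omega)$-convergence implies $L^1(\Omega)$-convergence on the bounded domain $\Omega$. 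Consequently the gradient flow generated by $\partial\mathcal{F}_h$ produces, for every $u_0 \in \overline{D(\mathcal{F}_h)}^{L^2(\Omega)}$, a unique strong solution $u \in C([0,T];L^2(\Omega)) \cap W^{1,2}_{loc}(0,T;L^2(\Omega))$ of the abstract Cauchy problem \eqref{CPDirichlet}, satisfying $u(0)=u_0$ and $-u'(t) \in \partial\mathcal{F}_h(u(t))$ for a.e. $t \in (0,T)$.

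Next I would use Theorem \ref{thm:Dirichlet} to upgrade this to the statement of the theorem. Under the standing hypotheses and $f^0(x,\nu_\Omega)\ge 0$, that theorem gives $\partial\mathcal{F}_h = \mathcal{A}_h$ and the density of $D(\mathcal{A}_h)$ in $L^2(\Omega)$; since $D(\mathcal{F}_h) = BV(\Omega)\cap L^2(\Omega)$ contains $D(\mathcal{A}_h)$, we get $\overline{D(\mathcal{F}_h)}^{L^2(\Omega)} = L^2(\Omega)$, so the construction above applies to every $u_0 \in L^2(\Omega)$. Moreover, by the very definition of the operator $\mathcal{A}_h$, the inclusion $-u'(t) \in \partial\mathcal{F}_h(u(t)) = \mathcal{A}_h u(t)$ for a.e.\ $t$ is equivalent to the existence, for a.e.\ $t \in (0,T)$, of a vector field $\z(t) \in X_2(\Omega)$ satisfying \eqref{e1DefPD}--\eqref{e4DefPD}. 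Hence the strong solution furnished by the abstract theory is precisely a weak solution of \eqref{DirichletProb} in the sense of Definition \ref{def:dirichlet1p}, which establishes existence.

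For uniqueness, let $u_1, u_2$ be two weak solutions with the same initial datum $u_0$. By Definition \ref{def:dirichlet1p} and Theorem \ref{thm:Dirichlet}, each $u_i$ satisfies $-u_i'(t) \in \mathcal{A}_h u_i(t) = \partial\mathcal{F}_h(u_i(t))$ for a.e.\ $t$, hence is a strong solution of \eqref{CPDirichlet}. The function $t \mapsto \|u_1(t)-u_2(t)\|_{L^2(\Omega)}^2$ is locally absolutely continuous on $(0,T)$, and by monotonicity of $\partial\mathcal{F}_h$,
\begin{equation}
\frac12 \frac{d}{dt}\|u_1(t)-u_2(t)\|_{L^2(\Omega)}^2 = \langle u_1'(t)-u_2'(t),\, u_1(t)-u_2(t)\rangle \le 0 \qquad \text{for a.e. } t\in(0,T),
\end{equation}
so $t\mapsto\|u_1(t)-u_2(t)\|_{L^2(\Omega)}$ is nonincreasing; since it vanishes at $t=0$ and $u_i \in C([0,T];L^2(\Omega))$, we conclude $u_1 \equiv u_2$.

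The bulk of the work has already been carried out in Theorem \ref{thm:Dirichlet}, so the present statement is essentially a corollary; the only points requiring genuine care are the lower semicontinuity of $\mathcal{F}_h$ on $L^2(\Omega)$ (which is where \ref{A1}--\ref{A2} and $f^0(x,\nu_\Omega)\ge 0$ re-enter, through the Anzellotti relaxation result of Section \ref{sec:functionofmeasure}) and the verification that the regularity class delivered by the abstract gradient-flow theory coincides with the one in Definition \ref{def:dirichlet1p}, namely $C([0,T];L^2(\Omega)) \cap W^{1,2}_{loc}(0,T;L^2(\Omega))$. Both are standard, and no substantial obstacle is expected.
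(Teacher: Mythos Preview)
Your proposal is correct and follows exactly the approach the paper takes: the paper presents this theorem as an immediate consequence of Theorem~\ref{thm:Dirichlet} combined with the classical Brezis theory of maximal monotone operators, and you have simply spelled out the details (lower semicontinuity of $\mathcal{F}_h$, density of the domain, and the standard contraction argument for uniqueness) that the paper leaves implicit.
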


Definition \ref{def:dirichlet1p} agrees with the classical definitions for the total variation flow and functionals of class $C^1$ similarly as in Examples \ref{ex:tvflow} and \ref{ex:c1functional}, with the only difference in the last condition (on the boundary behaviour). Below, we present another example, which concerns the gradient flow of the area functional.

\begin{example}{\rm
In the particular case of the nonparametric area integrand
$$f(x, \xi) = \sqrt{1 + \Vert \xi \Vert^2},$$
we have $f^0(x,\xi) = \vert \xi \vert$ and $\a(x, \xi) = \partial_\xi f(x, \xi) = \frac{\xi}{\sqrt{1+ \vert \xi \vert^2}}$. Therefore, the definition of a weak solution to the time-dependent minimal surface equation
\begin{equation}\label{timedepend}
\left\{ \begin{array}{lll} u_t (t,x) = {\rm div}\,  \left( \frac{Du(t,x)}{\sqrt{1+ \vert Du(t,x)\vert^2}} \right)   \quad &\hbox{in} \ \ (0, T) \times \Omega, \\[5pt]u(t) = h \quad &\hbox{on} \ \ (0, T) \times  \partial\Omega, \\[5pt] u(0,x) = u_0(x) \quad & \hbox{in} \ \  \Omega, \end{array} \right.
\end{equation}
takes the following form. Given $u_0 \in L^2(\Omega)$, we say that $u$ is a {\it weak solution} of the Dirichlet problem \eqref{timedepend} in $[0,T]$, if $u \in  C([0,T]; L^2(\Omega)) \cap W_{loc}^{1,2}(0, T; L^2(\Omega))$,   $u(0, \cdot) =u_0$, and for almost all $t \in (0,T)$ we have $u(t) \in BV(\Omega)$ and there exist vector fields $\z(t) \in X_2(\Omega)$ such that the following conditions hold:
\begin{equation}\label{e1Timed}\z(t) \in \frac{\nabla u(t,x)}{\sqrt{1+ \vert \nabla u(t,x)\vert^2}} \quad \mathcal{L}^N-\hbox{a.e. in } \Omega;\end{equation}
\begin{equation}\label{e2Timed}
u_t(t) = \mathrm{div}(\z(t))\quad  \hbox{in} \ \mathcal{D}^\prime(\Omega);
\end{equation}
\begin{equation}\label{e3Timed}
\z(t) \cdot D^s u(t) = \vert D^s u(t) \vert \quad \hbox{as measures};
\end{equation}
\begin{equation}\label{e4Timed}
[\z(t), \nu_\Omega] \in \mbox{sign}(h-u(t)) \qquad  \mathcal{H}^{N-1}-\mbox{a.e. on } \partial \Omega.
\end{equation}
This concept of solution coincides with the one given by Demengel and Temam in \cite{DemengelTeman1} for the nonparametric area integrand. Moreover, our existence result (Theorem \ref{ExistUnique}) agrees with \cite[Theorem 3.1]{DemengelTeman1}.  Let us note that the characterisation of solutions in \cite{DemengelTeman1} was also obtained using convex duality, but with a different technique from the one presented in this paper.}
\end{example}

\bigskip

\noindent {\bf Acknowledgments.} The first author has been partially supported by the Austrian Science Fund (FWF), grant 10.55776/ESP88, and the OeAD-WTZ project CZ 01/2021. The second author has been partially supported by the Conselleria d'Innovaci\'{o}, Universitats, Ci\`{e}ncia y Societat Digital, project AICO/2021/223. For the purpose of open access, the
authors have applied a CC BY public copyright licence to any Author Accepted Manuscript version arising from this submission.

\end{document}